\theoremstyle{plain}
    \newtheorem{thm}{Theorem}[section]
    \newtheorem{lem}[thm]   {Lemma}
    \newtheorem{cor}[thm]   {Corollary}
    \newtheorem{prop}[thm]  {Proposition}
\theoremstyle{definition}
    \newtheorem{defn}[thm]  {Definition}
    \newtheorem{rem}[thm]{Remark}
\def\max{\mathrm{max}}
\def\cat{\mathsf{cat}}
\def\secat{\mathsf{secat}}
\def\dim{\mathrm{dim}}
\def\ker{\mathrm{ker}}
\def\nil{\mathrm{nil}}
\newcommand{\be}{\begin{enumerate}}
\newcommand{\ee}{\end{enumerate}}
\newcommand{\R}{\mathbb{R}}
\newcommand{\Z}{\mathbb{Z}}
\newcommand{\C}{\mathbb{C}}
\newcommand{\Q}{\mathbb{Q}}
\newcommand{\co}{\colon\thinspace}
\newcommand{\PB}{\mathcal{P}}
\newcommand{\TC}{{\sf TC}}
\newcommand{\chd}{\mathrm{cd}}
\begin{document}

\title[Topological complexity of aspherical spaces]{New lower bounds for the topological complexity of aspherical spaces}

\author{Mark Grant}
\author{Gregory Lupton}
\author{John Oprea}

\address{School of Mathematics \& Statistics,
Herschel Building,
Newcastle University,
Newcastle upon Tyne NE1 7RU,
UK}

\email{mark.grant@nottingham.ac.uk}

\address{Department of Mathematics, Cleveland State University, Cleveland OH 44115 U.S.A.}

\email{g.lupton@csuohio.edu}
\email{j.oprea@csuohio.edu}

\date{\today}

\keywords{Topological complexity, aspherical spaces, Lusternik-Schnirelmann category, cohomological dimension, topological robotics, infinite groups}
\subjclass[2010]{55M99, 55P20 (Primary); 55M30, 20J06, 68T40 (Secondary).}

\begin{abstract} We show that the topological complexity of an aspherical space $X$ is bounded below by the cohomological dimension of the direct product $A\times B$, whenever $A$ and $B$ are subgroups of $\pi_1(X)$ whose conjugates intersect trivially. For instance, this assumption is satisfied whenever $A$ and $B$ are complementary subgroups of $\pi_1(X)$. This gives computable lower bounds for the topological complexity of many groups of interest (including semidirect products, pure braid groups, certain link groups, and Higman's acyclic four-generator group), which in some cases improve upon the standard lower bounds in terms of zero-divisors cup-length. Our results illustrate an intimate relationship between the topological complexity of an aspherical space and the subgroup structure of its fundamental group.
\end{abstract}

\thanks{This work was partially supported by grants from the Simons Foundation: (\#209575 to Gregory Lupton)
and (\#244393 to John Oprea).}

\maketitle
\section{Introduction}\label{sec:intro}

Topological complexity is a numerical homotopy invariant introduced by Farber in the articles \cite{Far03,Far04}. As well as being of intrinsic interest to homotopy theorists, its study is motivated by topological aspects of the motion planning problem in robotics. Define the topological complexity of a space $X$, denoted $\TC(X)$, to be the sectional category of the free path fibration $\pi_X\co X^I\to X\times X$, which sends a path $\gamma$ in $X$ to its pair $(\gamma(0),\gamma(1))$ of initial and final points. The number $\TC(X)$ gives a quantitative measure of the `navigational complexity' of $X$, when viewed as the configuration space of a mechanical system. Topological complexity is a close relative of the Lusternik--Schnirelmann category $\cat(X)$, although the two are independent. Further details and full definitions will be given in Section 2.

We remark once and for all that in this paper we adopt the convention of normalizing all category-type invariants to be one less than the number of open sets in the cover. So for instance, $\TC(X)=\cat(X)=0$ when $X$ is contractible.

Recall that a path-connected space $X$ is \emph{aspherical} if $\pi_i(X)=0$ for $i\ge 2$. The homotopy type of an aspherical space is determined by the isomorphism class of its fundamental group. Furthermore, for any discrete group $G$ one may construct, in a functorial way, a based aspherical complex $K(G,1)$ having $G$ as its fundamental group. Through this construction, any new homotopy invariant of spaces leads to a new and potentially interesting algebraic invariant of groups. In this paper we address the following problem, posed by Farber in \cite{Far06}: can one express $\TC(G):=\TC\big(K(G,1)\big)$ in terms of algebraic properties of the group $G$? This is an interesting open problem, about which relatively little is known beyond some particular cases (see below). In contrast, the corresponding problem for Lusternik--Schnirelmann category was solved in the late 1950's and early 1960's, with work of Eilenberg--Ganea \cite{EG}, Stallings \cite{Sta} and Swan \cite{Swa}. Their combined work showed that $\cat(G):=\cat(K(G,1))=\chd(G)$, where $\chd$ denotes the \emph{cohomological dimension}, a familiar algebraic invariant of discrete groups.

Groups $G$ for which the precise value of $\TC(G)$ is known include: orientable surface groups \cite{Far03}; pure braid groups $\PB_n$ \cite{FY} and certain of their subgroups $\PB_{n,m}=\ker (\PB_n\to \PB_m)$ which are kernels of homomorphisms obtained by forgetting strands \cite{FGY}; right-angled Artin groups \cite{C-P08}; basis-conjugating automorphism groups of free groups \cite{C-P08-2}; and almost-direct products of free groups \cite{C10}. In all of these calculations, sharp lower bounds are given by cohomology. If $k$ is a field, let $\cup\;\co H^\ast(G;k)\otimes H^\ast(G;k)\to H^\ast(G;k)$ denote multiplication in the cohomology $k$-algebra of the group $G$. The ideal $\ker (\cup)\subseteq H^\ast(G;k)\otimes H^\ast(G;k)$ is called the \emph{ideal of zero-divisors}. One then has that $\TC(G)\ge \operatorname{nil} \ker (\cup)$, where $\nil$ denotes the nilpotency of an ideal. This is often referred to as the \emph{zero-divisors cup-length} lower bound.

On the other hand, it is known that zero-divisors in cohomology with untwisted coefficients are not always sufficient to determine topological complexity. In \cite{G1} the topological complexity of the link complement of the Borromean rings was studied, and sectional category weight and Massey products were applied to obtain lower bounds. This is, to the best of our knowledge, the only previously known example of an aspherical space $X$ for which $\TC(X)$ is greater than the zero-divisors cup-length for any field of coefficients.

In this paper we give new lower bounds for $\TC(G)$ which are described in terms of the subgroup structure of $G$. These lower bounds do not, therefore, require knowledge of the cohomology algebra of $G$ or its cohomology operations.

\begin{thm}\label{main1}
Let $G$ be a discrete group, and let $A$ and $B$ be subgroups of $G$. Suppose that $gAg^{-1}\cap B=\{1\}$ for every $g\in G$.
Then $\TC(G)\ge \chd(A\times B)$.
\end{thm}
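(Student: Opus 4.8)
The plan is to realise $\TC(G)$ as the sectional category of a covering space and to read the hypothesis on $A$ and $B$ as the statement that a certain action is free. Write $X = K(G,1)$, so that $X\times X = K(G\times G,1)$. The fibre of the free path fibration $\pi_X\co X^I \to X\times X$ is $\Omega X$, which is homotopy equivalent to the discrete set $G$ since $X$ is aspherical. Hence $\pi_X$ is fibre-homotopy equivalent to the covering $p_\Delta\co X \to X\times X$ classified by the diagonal subgroup $\Delta(G)\le G\times G$; its fibre is the $(G\times G)$-set $(G\times G)/\Delta(G)\cong G$, on which $(g_1,g_2)$ acts by $g\mapsto g_1 g g_2^{-1}$. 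In particular $\TC(G) = \secat(p_\Delta)$. First I would record the elementary but crucial observation that the restriction of this action to $A\times B\le G\times G$, namely $(a,b)\cdot g = a g b^{-1}$, is \emph{free}: the stabiliser of $g$ is isomorphic to $g^{-1}Ag\cap B$, which is trivial for all $g$ precisely under the hypothesis $gAg^{-1}\cap B=\{1\}$.

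Next I would let $f\co Y\to X\times X$ be the map induced by the inclusion $A\times B\hookrightarrow G\times G$, where $Y = K(A\times B,1)$, and pull back $p_\Delta$ along $f$ to obtain a covering $q\co \widetilde Y\to Y$. Since sectional category does not increase under pullback, $\secat(q)\le \secat(p_\Delta) = \TC(G)$, so it suffices to prove $\secat(q)\ge \chd(A\times B)$. The covering $q$ corresponds to the $(A\times B)$-set $G$ with the action above, which is free; therefore its total space is a disjoint union of copies of the universal cover of $Y$, one for each double coset in $A\backslash G/B$. As $Y$ is aspherical, every component of $\widetilde Y$ is contractible, and consequently $H^{>0}(\widetilde Y;\mathcal L) = 0$ for every local coefficient system. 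Thus $q^\ast$ annihilates all positive-dimensional cohomology of $Y$.

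The lower bound on $\secat(q)$ then comes from Schwarz's cohomological estimate: if classes $u_1,\dots,u_k$ lie in $\ker q^\ast$ and have nonzero product, then $\secat(q)\ge k$. Because $q^\ast$ kills everything in positive degree, any nonzero $k$-fold product of positive-dimensional classes---over any system of local coefficients---witnesses $\secat(q)\ge k$. Applying this to the Berstein--Schwarz class $\mathfrak v\in H^1\big(A\times B; I(A\times B)\big)$ of the group $A\times B$, where $I(A\times B)$ is the augmentation ideal of $\Z[A\times B]$, I would invoke the classical fact (underlying the Eilenberg--Ganea--Stallings--Swan theorem $\cat(K(\Gamma,1)) = \chd(\Gamma)$) that the largest surviving power of $\mathfrak v$ is exactly $\chd(A\times B)$. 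This yields $\secat(q)\ge \chd(A\times B)$ and hence the theorem.

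The main obstacle is this last nonvanishing statement, $\mathfrak v^{\,\chd(A\times B)}\ne 0$: it is the crux of the sharp categorical lower bound, and it is what upgrades a mere cup-length estimate to one governed by cohomological dimension. An essentially equivalent route avoids the pullback and instead produces, from the short exact sequence $0\to I\to \Z[G]\to \Z\to 0$ of $(G\times G)$-modules, a class $\mathfrak z\in H^1(G\times G; I)$ with $\Delta^\ast\mathfrak z = 0$, and proves $\mathfrak z^{\,\chd(A\times B)}\ne 0$ by restricting to $A\times B$ and using freeness to identify the restriction with $\mathfrak v$; there the analogous obstacle is verifying that this restriction is split injective on the relevant cohomology.
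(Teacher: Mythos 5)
Your proposal is correct, and it reaches the bound by a genuinely different route from the paper. The paper pulls back the free path fibration $\pi_X$ itself along $Y_A\times Y_B\to X\times X$ (your diagonal covering $p_\Delta$ is fibre-homotopy equivalent to $\pi_X$, so the two pullbacks have the same sectional category) and then argues without any cohomology: applying $[S^1,-]$ and identifying $[S^1,Y_A\times Y_B]$ with pairs of conjugacy classes $[A]\times[B]$, it shows that any open set admitting a partial section of $q$ carries only loops whose classes are pairs $([a],[b])$ with $a$ conjugate to $b$ in $G$, hence null-homotopic under the hypothesis; such sets are $1$-categorical in Fox's sense, whence $\secat(q)\ge \cat_1(Y_A\times Y_B)=\chd(A\times B)$ by \v Svarc's identity $\cat_1(Y)=\secat(\widetilde Y\to Y)$ together with Eilenberg--Ganea. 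Your key observation --- that the hypothesis says exactly that the $(A\times B)$-action $(a,b)\cdot g=agb^{-1}$ on $G$ is free, so that the pullback total space is a disjoint union of copies of the universal cover indexed by $A\backslash G/B$ --- is a sharper structural statement than anything the paper makes explicit, and it would yield the paper's conclusion instantly (a partial section over a connected set lifts that set to the universal cover, making it $1$-categorical). Where you then invoke the local-coefficient cup-length bound for $\secat$ and the Berstein--\v Svarc nonvanishing $\mathfrak v^{\,\chd(A\times B)}\neq 0$, the paper instead quotes Fox/\v Svarc; these are two faces of the same classical fact that $\secat$ of the universal cover of an aspherical complex equals the cohomological dimension of its fundamental group, so the nonvanishing you flag as the main obstacle is a genuine classical theorem (Berstein; \v Svarc; cf.\ Dranishnikov--Rudyak), not a gap. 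The one step you should make explicit is the fibre-homotopy equivalence $\pi_X\simeq p_\Delta$: the lifting criterion produces a map $X^I\to E_\Delta$ over $X\times X$ which is an isomorphism on $\pi_1$ between aspherical total spaces, and Dold's theorem upgrades it to a fibre homotopy equivalence. What your route buys is the covering-space model $\TC(G)=\secat(p_\Delta)$ and explicit cohomological witnesses amenable to weight-type refinements; what the paper's buys is a completely elementary conjugacy-class diagram chase requiring no local coefficients and no Berstein class.
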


Thus $\TC(G)$ is bounded below by the cohomological dimension of the direct product $A\times B$ if no non-trivial element of $A$ is conjugate in $G$ to an element of $B$. Note that $A\times B$ is not a subgroup of $G$ in general, and so it may well happen that $\chd(A\times B)>\chd(G)$.

From another viewpoint, Theorem \ref{main1} implies that upper bounds for the topological complexity of $G$ force certain pairs of subgroups of $G$ to contain non-trivial conjugate elements.

We next note some general settings in which the assumptions of Theorem \ref{main1} are satisfied. Recall that subgroups $A$ and $B$ of $G$ are \emph{complementary} if $A\cap B=\{1\}$ and $G=AB$. Then for every $g\in G$ we can write $g^{-1}=\alpha\beta$ for some $\alpha\in A$ and $\beta\in B$, and the condition $gAg^{-1}\cap B=\{1\}$ follows easily from $A\cap B=\{1\}$. In the special case when either $A$ or $B$ is normal in $G$, then $G$ is a semi-direct product.

\begin{cor}\label{main2}
Let $G$ be a discrete group, and let $A$ and $B$ be complementary subgroups of $G$.
Then $\TC(G)\ge \chd(A\times B)$.
\end{cor}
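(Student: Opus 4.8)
The plan is to deduce the corollary directly from Theorem~\ref{main1}, so the only task is to verify that complementary subgroups satisfy the hypothesis $gAg^{-1}\cap B=\{1\}$ for every $g\in G$. This is almost entirely done in the paragraph preceding the statement, so I would simply make that remark rigorous and complete.

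\begin{proof}
By Theorem~\ref{main1}, it suffices to verify that $gAg^{-1}\cap B=\{1\}$ for every $g\in G$. So fix $g\in G$ and suppose $x\in gAg^{-1}\cap B$; we must show $x=1$. Write $x=gag^{-1}$ with $a\in A$ and $x\in B$. Since $A$ and $B$ are complementary, $G=AB$, so we may write $g^{-1}=\alpha\beta$ for some $\alpha\in A$ and $\beta\in B$; equivalently $g=\beta^{-1}\alpha^{-1}$. Then
\[
x=gag^{-1}=\beta^{-1}\alpha^{-1}\,a\,\alpha\beta.
\]
Set $a'=\alpha^{-1}a\alpha\in A$, so that $x=\beta^{-1}a'\beta$, and hence $\beta x\beta^{-1}=a'\in A$. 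But $x\in B$ and $\beta\in B$, so $\beta x\beta^{-1}\in B$ as well. Therefore $a'=\beta x\beta^{-1}\in A\cap B=\{1\}$, giving $a'=1$ and thus $x=\beta^{-1}a'\beta=1$. This proves $gAg^{-1}\cap B=\{1\}$ for all $g\in G$, and the result follows from Theorem~\ref{main1}.
\end{proof}

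The argument is completely elementary group theory, so I do not anticipate any genuine obstacle. The only point requiring a little care is bookkeeping with the conjugations: one must use the decomposition of $g^{-1}$ (rather than $g$) to land inside $A$ after conjugating, and then exploit the fact that conjugation by $\beta\in B$ preserves $B$ to force the common element into $A\cap B$. Once the trivial-intersection condition is established, invoking Theorem~\ref{main1} immediately yields $\TC(G)\ge\chd(A\times B)$, completing the proof.
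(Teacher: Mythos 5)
Your proof is correct and follows essentially the same route as the paper: the paragraph preceding Corollary~\ref{main2} deduces it from Theorem~\ref{main1} by writing $g^{-1}=\alpha\beta$ with $\alpha\in A$, $\beta\in B$ and using $A\cap B=\{1\}$, which is precisely the computation you carry out in detail. Your conjugation bookkeeping (passing to $a'=\alpha^{-1}a\alpha$ and then conjugating by $\beta\in B$ to land in $A\cap B$) is exactly the intended verification, just made explicit.
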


\begin{cor}\label{semidirect}
If $G=A\rtimes B$ is a semidirect product, then $\TC(G)\ge \chd(A\times B)$.
\end{cor}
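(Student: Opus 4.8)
The plan is to recognize \corref{semidirect} as an immediate special case of \corref{main2}. In an (internal) semidirect product $G = A \rtimes B$, by definition $A$ is a normal subgroup of $G$, $B$ is a subgroup, and the two factors satisfy $A \cap B = \{1\}$ together with $AB = G$. These last two conditions say precisely that $A$ and $B$ are complementary subgroups of $G$ in the sense recalled just before \corref{main2}. Hence I would simply invoke \corref{main2} to conclude $\TC(G) \ge \chd(A \times B)$.

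If one prefers to avoid the intermediate corollary, the hypothesis of \thmref{main1} can be checked directly, and this is in fact even cleaner in the semidirect case. Since $A$ is normal in $G$, every conjugate satisfies $gAg^{-1} = A$, so that $gAg^{-1} \cap B = A \cap B = \{1\}$ for all $g \in G$. Thus the conjugation condition collapses to the single intersection condition $A \cap B = \{1\}$, which holds by definition, and \thmref{main1} applies verbatim. There is genuinely no obstacle to overcome here: the normality of $A$ trivializes the conjugation hypothesis, and the entire content of the argument is unwinding the definition of a semidirect product, after which the stated inequality follows with no further work.
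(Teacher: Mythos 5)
Your proposal is correct and follows the paper's own route: the paper derives Corollary~\ref{semidirect} exactly as you do, by observing that the factors of a semidirect product are complementary subgroups and invoking Corollary~\ref{main2} (itself a consequence of Theorem~\ref{main1}). Your alternative direct check---that normality of $A$ gives $gAg^{-1}\cap B = A\cap B = \{1\}$ immediately, so Theorem~\ref{main1} applies without passing through the complementary-subgroups argument---is also valid and, as you note, even slightly simpler in this special case.
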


These results should be compared with results in our companion paper \cite{GLO}, in which we treat non-aspherical spaces.

The paper is organized as follows. In Section 2 we recall the necessary definitions and preliminaries concerning category, sectional category and cohomological dimension. Section 3 is a gallery of examples illustrating how our Theorem \ref{main1} may be applied. For right-angled Artin groups, we recover the lower bounds for their topological complexity obtained by Cohen and Pruidze in \cite{C-P08}. In the case of pure braid groups, the lower bounds for their topological complexity obtained by Farber and Yuzvinsky in \cite{FY} follow from our Theorem \ref{main1} using an appealing geometric argument. We also consider the Borromean rings, and recover the conclusion of \cite[Example 4.3]{G1}, thus showing that our bounds can improve on zero-divisors cup-length bounds with very little computational effort. In Section 4 we consider Higman's curious acyclic group, introduced in \cite{H51}, and show that its topological complexity is 4. Since this group is known to have trivial cohomology for all finitely generated coefficient modules, it seems unlikely that this result could be obtained by standard cohomological methods. Finally, in Section 5 we prove Theorem \ref{main1}. The proof, which uses the notion of $1$-dimensional category due to Fox \cite{Fo41}, can be read independently of Sections 3 and 4.

Several of the arguments in Section 4 use Bass--Serre theory, and are due to Y.\ de Cornulier. We warmly thank him for his input.

\section{Definitions, notations and preliminaries}

We begin by recalling the definitions of Lusternik--Schnirelmann category and sectional category. Further details can be found in the reference \cite{CLOT03}. All spaces are assumed to have the homotopy type of a CW complex, and all groups are considered discrete.

\begin{defn} The \emph{(Lusternik--Schnirelmann) category} of a space $X$, denoted $\cat(X)$, is defined to be the least integer $k$ for which $X$ admits a cover by $k+1$ open sets $U_0, \ldots , U_k$ such that each inclusion $U_i\hookrightarrow X$ is null-homotopic. If no such integer exists, we set $\cat(X)=\infty$.
\end{defn}
\begin{defn}
Let $p\co E\to B$ be a continuous map of spaces. The \emph{sectional category} of $p$, denoted $\secat(p)$, is defined to be the least integer $k$ for which $B$ admits a cover by $k+1$ open sets $U_0,\ldots , U_k$ on each of which there exists a partial homotopy section of $p$ (that is, a continuous map $s_i\co U_i\to E$ such that $p\circ s_i$ is homotopic to the inclusion $U_i\hookrightarrow B$). If no such integer exists, we set $\secat(p)=\infty$.
\end{defn}

When $p\co E\to B$ is a (Hurewicz) fibration, an open subset $U\subseteq B$ admits a partial homotopy section if and only if it admits a partial section (that is, a map $\sigma\co U\to E$ such that $p\circ \sigma$ equals the inclusion $U\hookrightarrow B$). The sectional category of fibrations was first studied systematically by \v Svarc (under the name \emph{genus}) in \cite{Sv66}, where proofs of the following results can be found.

\begin{prop} \label{surjnull}
Let $p\co E\to B$ be a fibration.
\be
\item If $B$ is path-connected and $E$ is non-empty, then $\secat(p)\le \cat(B)$.
\item If $p$ is null-homotopic, then $\secat(p)\ge \cat(B)$.
\ee
\end{prop}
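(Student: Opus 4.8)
The plan is to prove each inequality directly from the definitions, treating the two parts separately; both arguments are elementary and essentially formal, requiring no more than the composition of homotopies. Throughout I would work with the homotopy-section formulation of $\secat$, so that a partial homotopy section over $U\subseteq B$ means a map $s\co U\to E$ with $p\circ s$ homotopic to the inclusion $\iota\co U\hookrightarrow B$.

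For part (1), I would start from a categorical open cover $U_0,\ldots,U_k$ of $B$ realizing $k=\cat(B)$, so that each inclusion $\iota_i\co U_i\hookrightarrow B$ is null-homotopic. Fix a point $e\in E$, which exists since $E$ is non-empty, and let $s_i\co U_i\to E$ be the constant map at $e$. Then $p\circ s_i$ is the constant map at $p(e)$. On the other hand $\iota_i$ is homotopic to the constant map at some point $b_i\in B$, and since $B$ is path-connected the constant maps at $b_i$ and at $p(e)$ are homotopic. Chaining these homotopies gives $p\circ s_i\simeq\iota_i$, so $s_i$ is a partial homotopy section over $U_i$. As $\{U_i\}$ is then a sectional cover by $k+1$ sets, we conclude $\secat(p)\le k=\cat(B)$.

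For part (2), I would run essentially the same reasoning in reverse. Suppose $\secat(p)=k$ and choose open sets $U_0,\ldots,U_k$ covering $B$ together with partial homotopy sections $s_i\co U_i\to E$ satisfying $p\circ s_i\simeq\iota_i$. Since $p$ is null-homotopic, $p\simeq c$ for some constant map $c$, whence $\iota_i\simeq p\circ s_i\simeq c\circ s_i$, which is again constant. Thus every inclusion $\iota_i$ is null-homotopic, so $\{U_i\}$ is a categorical cover of $B$ and $\cat(B)\le k=\secat(p)$.

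I do not expect a genuine obstacle here, since the statement is the foundational comparison due to \v{S}varc. The only point requiring a little care is in part (1), where path-connectedness of $B$ is used to homotope the constant at $p(e)$ to the constant at $b_i$; without this hypothesis the constant section at $e$ need not be a homotopy section over a $U_i$ lying in a different path component. I note finally that neither argument invokes the fibration hypothesis directly, as both are carried out purely with the homotopy-section formulation; the equivalence of partial homotopy sections with strict partial sections for fibrations, remarked upon above, is not needed for these bounds.
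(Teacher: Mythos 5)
Your proof is correct. The paper offers no proof of this proposition itself, citing instead \v{S}varc's paper where these results originate, and your direct argument---constant maps as homotopy sections for part (1), composing the null-homotopy of $p$ with the sections for part (2)---is exactly the standard one, including the accurate observations that path-connectedness of $B$ is what lets the constant section work in part (1) and that the fibration hypothesis is never invoked once $\secat$ is formulated via partial homotopy sections.
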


\begin{prop} \label{pullbackbound}
Suppose that the diagram
\[
\xymatrix{
A \ar[r] \ar[d]_q & E \ar[d]^p \\
Y \ar[r] & B }
\]
is a homotopy pullback. Then $\secat(q)\le \secat(p)$.
\end{prop}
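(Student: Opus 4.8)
The plan is to pull back the open cover witnessing $\secat(p)$ along the bottom map of the square, and then to assemble partial sections of $q$ over the pulled-back sets out of the partial sections of $p$ together with the accompanying nullhomotopy data. Since sectional category depends only on the map up to homotopy equivalence, I may replace $A$ by any convenient model of the homotopy pullback; I will use the standard one. Writing $f\co Y\to B$ for the bottom arrow, I model the homotopy pullback as
\[
A=\{(y,\gamma,e)\in Y\times B^I\times E : \gamma(0)=f(y),\ \gamma(1)=p(e)\},
\]
with $q\co A\to Y$ the projection $(y,\gamma,e)\mapsto y$.

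Next I would set $k=\secat(p)$ and choose an open cover $U_0,\dots,U_k$ of $B$ together with partial homotopy sections $s_i\co U_i\to E$; thus for each $i$ there is a homotopy $H_i\co U_i\times I\to B$ from $p\circ s_i$ to the inclusion $U_i\hookrightarrow B$. Setting $V_i=f^{-1}(U_i)$ gives an open cover $V_0,\dots,V_k$ of $Y$ by $k+1$ sets. Over each $V_i$ I would define $\sigma_i\co V_i\to A$ by
\[
\sigma_i(y)=\bigl(y,\ t\mapsto H_i(f(y),1-t),\ s_i(f(y))\bigr).
\]
The endpoint identities $H_i(f(y),1)=f(y)$ and $H_i(f(y),0)=p\bigl(s_i(f(y))\bigr)$ show that $\sigma_i$ lands in $A$, and continuity is immediate. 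Because $q\circ\sigma_i$ is literally the inclusion $V_i\hookrightarrow Y$, each $\sigma_i$ is a partial (homotopy) section of $q$, whence $\secat(q)\le k=\secat(p)$.

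The only genuinely delicate point is the passage between models: the hypothesis supplies $A$ only up to a homotopy equivalence commuting with the projections to $Y$, so I would first record that $\secat$ is unchanged under such an equivalence — given $\phi\co A\xrightarrow{\simeq}A'$ with $q'\circ\phi\simeq q$, post-composing partial sections with a homotopy inverse of $\phi$ transfers them between $q$ and $q'$. With that in hand the argument reduces to the explicit model above, and the heart of the matter is the simple observation that the homotopy $H_i$ furnishes exactly the path coordinate needed to lift $f|_{V_i}$ through the homotopy pullback. I expect the bookkeeping with $H_i$ — orienting the path correctly and checking its two endpoints — to be the main thing to get right, while the remainder of the argument is formal.
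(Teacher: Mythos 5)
Your proof is correct: the endpoint check on the path coordinate is right (with $\gamma(t)=H_i(f(y),1-t)$ one gets $\gamma(0)=f(y)$ and $\gamma(1)=p(s_i(f(y)))$, so $\sigma_i$ lands in the model and $q\circ\sigma_i$ is even a strict section over $V_i=f^{-1}(U_i)$), and your preliminary observation that $\secat$ is invariant under a homotopy equivalence over $Y$ legitimately reduces the homotopy-pullback hypothesis to the explicit path-space model. The paper offers no proof of its own but cites \v Svarc, and his argument is exactly yours --- pull the open cover back along the bottom map and transport the (homotopy) sections --- so your approach coincides with the intended one, merely making explicit the bookkeeping that the homotopy-pullback formulation requires.
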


Let $X$ be a space. The \emph{free path fibration on $X$} is the fibration
\[
\pi_X\co X^I\to X\times X,\qquad \pi_X(\gamma) = \big( \gamma(0),\gamma(1)\big),
\]
where $X^I$ denotes the space of all paths in $X$ endowed with the compact-open topology. This has fiber $\Omega X$, the subspace of $X^I$ consisting of paths which begin and end at some fixed point $x_0\in X$.

\begin{defn}
The \emph{topological complexity} of a space $X$, denoted $\TC(X)$, is defined to be $\secat(\pi_X)$, the sectional category of the free path fibration on $X$.
\end{defn}

Topological complexity is therefore an important special case of sectional category. As well as having applications to the motion planning problem in robotics (as explained in the original articles \cite{Far03,Far04,Far06}) it also turns out to give an equivalent formulation of the immersion problem for real projective spaces \cite{FTY}. It is a close relative of the LS-category, although the two notions are independent. Note that when $X$ is path-connected, simple arguments using Propositions \ref{surjnull} (1) and \ref{pullbackbound} yield the inequalities
\[
\cat(X) \le \TC(X)\le \cat(X\times X).
\]
Either inequality can be an equality, as illustrated by the orientable surfaces (see \cite[Theorem 9]{Far03}). Note also that when $X$ is a CW complex, combining this upper bound with the standard dimensional upper bound for category yields
\[
\TC(X)\le 2\cdot\dim(X).
\]

In this paper we consider the problem of calculating the topological complexity of aspherical spaces. Given a group $G$, one can construct an aspherical based CW complex $K(G,1)$, whose homotopy type is an invariant of the isomorphism class of $G$. One can therefore define $\cat(G):=\cat(K(G,1))$ and $\TC(G):=\TC(K(G,1))$, and then ask for results which describe these invariants in purely algebraic terms. In the case of category, such results were obtained by Eilenberg--Ganea, Stallings and Swan, in terms of cohomological dimension.

\begin{defn}
The \emph{cohomological dimension} of a group $G$, denoted $\chd(G)$, is defined to be the least integer $k$ such that $H^i(G;M)=0$ for all $i>k$ and all coefficient $\Z[G]$-modules $M$. If no such integer exists, we set $\chd(G)=\infty$.
\end{defn}

\begin{thm}[Eilenberg--Ganea \cite{EG}, Stallings \cite{Sta}, Swan \cite{Swa}]
For any group $G$, we have $\cat(G)=\chd(G)$.
\end{thm}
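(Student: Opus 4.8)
The plan is to prove the two inequalities $\cat(G)\ge\chd(G)$ and $\cat(G)\le\chd(G)$ by separate arguments, one cohomological and one geometric, identifying $\cat(G)$ throughout with $\cat(X)$ for an aspherical CW model $X=K(G,1)$ as in the definition above.

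For the lower bound I would appeal to the cohomological lower bound for category, but computed with \emph{twisted} coefficients, since cup-length over a constant field is in general strictly smaller than $\chd(G)$. The natural device is the universal (Berstein--Schwarz) class $\mathfrak b_G\in H^1(G;I)$ arising from the augmentation sequence $0\to I\to\Z[G]\to\Z\to 0$, where $I=\ker(\Z[G]\to\Z)$. The general fact that $\cat(X)$ bounds from below the cup-length of $X$ taken over arbitrary local systems --- which itself follows from the sectional-category lower bound applied to the path fibration over $X$, whose total space is contractible so that every positive-degree class becomes a zero-divisor --- reduces the problem to showing that the $n$-fold power $\mathfrak b_G^{\,n}\in H^n(G;I^{\otimes n})$ is nonzero whenever $n\le\chd(G)$. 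This is exactly the universality of $\mathfrak b_G$: any class in $H^n(G;M)$ is the image of $\mathfrak b_G^{\,n}$ under a coefficient homomorphism $I^{\otimes n}\to M$, so the top nonvanishing cohomology of $G$ forces $\mathfrak b_G^{\,n}\neq 0$, giving $\cat(G)\ge\nil(\mathfrak b_G)=\chd(G)$.

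For the upper bound I would exhibit an economical CW model for $K(G,1)$ and invoke $\cat(X)\le\dim(X)$. When $\chd(G)=n\ge 3$ the Eilenberg--Ganea construction builds a model of dimension exactly $n$, giving $\cat(G)\le n$; when $\chd(G)=1$ the Stallings--Swan theorem identifies $G$ as a free group, so $K(G,1)$ is a graph and $\cat(G)\le 1$; and $\chd(G)=0$ forces $G$ trivial. The main obstacle is the borderline case $\chd(G)=2$, where the geometric dimension of $G$ may a priori equal $3$ --- this is precisely the still-open Eilenberg--Ganea problem --- so that $\cat\le\dim$ applied to a minimal model yields only $\cat(G)\le 3$. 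To close the gap one must show that category drops strictly below dimension here, the key input being that $H^3(K(G,1);M)=0$ for every coefficient module $M$ since $\chd(G)=2$; equivalently, one invokes the sharper equality $\cat(G)=\nil(\mathfrak b_G)$, which bypasses geometric dimension altogether and delivers $\cat(G)=2$ directly. Combining this with the twisted cup-length lower bound yields $\cat(G)=\chd(G)$ for every group $G$.
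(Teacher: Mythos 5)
The paper does not actually prove this statement: it is quoted as the classical combined theorem of Eilenberg--Ganea, Stallings and Swan, with citations in place of an argument. So the question is only whether your reconstruction of the classical proof is sound. Most of it is. The lower bound is correct and is the standard one: the local-coefficient cup-length bound for sectional category applied to the based path fibration gives $\cat(X)\ge$ cup-length over arbitrary local systems, and the universality of the Berstein--Schwarz class (any $u\in H^n(G;M)$ is a coefficient-image of $\mathfrak b_G^{\,n}$) gives $\mathfrak b_G^{\,n}\neq 0$ for $n=\chd(G)$, hence $\cat(G)\ge\chd(G)$. The upper bound is also fine for $\chd(G)\ge 3$ (Eilenberg--Ganea: geometric dimension equals $\chd$), for $\chd(G)=1$ (Stallings--Swan: $G$ is free), and for $\chd(G)=0$ ($G$ trivial).

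The genuine gap is your treatment of the case $\chd(G)=2$, which as written is circular. You ``invoke the sharper equality $\cat(G)=\nil(\mathfrak b_G)$''; but by the very universality you used in the lower bound, $\nil(\mathfrak b_G)=\chd(G)$, so this equality \emph{is} the theorem being proved. Indeed, if it were available as a black box, your entire case analysis (Eilenberg--Ganea for $\chd\ge 3$, Stallings--Swan for $\chd=1$) would be redundant --- a sign that the invocation proves too much. What is actually available, and what the classical argument uses, is the Berstein--\v Svarc dimension criterion: for a complex $X$ of dimension $n\ge 3$, one has $\cat(X)\le n-1$ if and only if the primary obstruction, which is identified with $\mathfrak b^{\,n}$, vanishes --- the point being that on an $n$-complex the obstruction to sectioning the $(n-1)$-st Ganea fibration is a \emph{single} class in $H^n(X;-)$ with local coefficients, all higher obstructions vanishing for dimension reasons. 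Applying this with $n=3$ to the $3$-dimensional Eilenberg--Ganea model of a $\chd$-$2$ group, the obstruction lies in $H^3(G;-)=0$, giving $\cat(G)\le 2$. Your stated ``key input'' $H^3(K(G,1);M)=0$ is exactly right, but the passage from this vanishing to $\cat(G)\le 2$ is a real obstruction-theoretic theorem, not something ``equivalent'' to it; without that step (or an honest citation of Berstein's theorem for $n$-complexes, $n\ge3$, rather than of $\cat(G)=\nil(\mathfrak b_G)$ itself), the borderline case --- historically the delicate one --- remains unproved.
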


This result identifies $\cat(G)$ with a well-known, algebraically defined invariant of $G$, namely its cohomological dimension (which agrees with the projective dimension of $\Z$ as a $\Z[G]$-module, see Brown \cite[Chapter VIII]{Bro}). One might hope for a similar result for $\TC(G)$. However, no such result is known, even in conjectural form. Note that if $G$ has torsion (and in particular if $G$ is finite) then $\chd(G)=\cat(G)=\infty$, and so $\TC(G)=\infty$ also. Therefore, in this paper we will consider only torsion-free infinite groups of finite cohomological dimension.

We conclude this section with two lemmas on cohomological dimension of direct products, which will be useful in applying our Theorems \ref{main1} and \ref{main2} in specific cases. It is known that the cohomological dimension behaves sub-additively, in that $\chd(A\times B)\le \chd(A) + \chd(B)$ for any groups $A$ and $B$. It is also known that the inequality may be strict (for instance, it follows from \cite{Var} that $\chd(\Q\times \Q)=3<4=\chd(\Q)+\chd(\Q)$). This strictness cannot occur, however, if the groups satisfy certain duality hypotheses.

Recall from \cite{BiEck} that a group $A$ is called a \emph{duality group of dimension $k$} if there exists some $\Z[A]$-module $C$ and an element $e\in H_k(A;C)$ such that cap product with this element gives an isomorphism
\[
-\cap e \co H^i(A;M)\cong H_{k-i}(A;M\otimes C)
\]
for all $i$ and all $\Z[A]$-modules $M$. Note that in this case, $\chd(A)=k$. If $C$ can be chosen to have underlying abelian group $\Z$, then $A$ is called a \emph{Poincar\' e duality group of dimension $k$}, or $PD_k$ group for short. If, in addition, $C$ can be chosen to be $\Z$ with the trivial module structure, then $A$ is called an \emph{orientable $PD_k$ group}.

For instance, the fundamental group of a closed (orientable) aspherical $k$-manifold is an (orientable) $PD_k$ group. There are many examples of duality groups which are not Poincar\' e duality groups, such as knot groups and Baumslag--Solitar groups \cite{BiEck}.

\begin{lem}[{\cite[Theorem 3.5]{BiEck}}]\label{dualitygroups}
Let
\[
\xymatrix{
1 \ar[r] & B\ar[r] & \Gamma \ar[r] & A \ar[r] & 1
}
\]
be an extension of groups in which $A$ and $B$ are duality groups, of dimensions $k$ and $\ell$ respectively. Then $\Gamma$ is a duality group of dimension $k+\ell$. In particular, if $A$ and $B$ are duality groups then $\chd(A\times B)=\chd(A)+\chd(B)$.
\end{lem}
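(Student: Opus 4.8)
The plan is to prove the first (and main) statement, from which the ``in particular'' claim follows by applying it to the split extension $1\to B\to A\times B\to A\to 1$ together with the fact, noted above, that a duality group of dimension $n$ has cohomological dimension $n$. My approach rests on the homological characterization of duality groups, also due to Bieri--Eckmann: a group $G$ of type $\mathrm{FP}$ is a duality group of dimension $n$ if and only if $H^i(G;\Z[G])=0$ for $i\neq n$ and $H^n(G;\Z[G])$ is torsion-free as an abelian group, in which case the dualizing module $C$ may be taken to be $H^n(G;\Z[G])$. Since duality groups are of type $\mathrm{FP}$, and an extension of one group of type $\mathrm{FP}$ by another is again of type $\mathrm{FP}$, the group $\Gamma$ is of type $\mathrm{FP}$; so it suffices to compute $H^\ast(\Gamma;\Z[\Gamma])$ and verify the two conditions with $n=k+\ell$.

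First I would compute $H^\ast(B;\Z[\Gamma])$. Restricted to $B$, the module $\Z[\Gamma]$ is free, isomorphic to a direct sum of copies of $\Z[B]$ indexed by the cosets $\Gamma/B\cong A$. Because $B$ has type $\mathrm{FP}$, the functor $H^\ast(B;-)$ commutes with this direct sum, so $H^q(B;\Z[\Gamma])\cong H^q(B;\Z[B])\otimes_\Z \Z[A]$. As $B$ is a duality group of dimension $\ell$ with dualizing module $C_B=H^\ell(B;\Z[B])$, this vanishes for $q\neq \ell$ and equals $C_B\otimes_\Z\Z[A]$ for $q=\ell$. Feeding this into the Lyndon--Hochschild--Serre spectral sequence $E_2^{p,q}=H^p(A;H^q(B;\Z[\Gamma]))\Rightarrow H^{p+q}(\Gamma;\Z[\Gamma])$, the concentration in the single row $q=\ell$ forces collapse, yielding $H^n(\Gamma;\Z[\Gamma])\cong H^{n-\ell}(A;C_B\otimes_\Z\Z[A])$.

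It then remains to analyze $H^p(A;C_B\otimes_\Z\Z[A])$. The relevant $A$-module structure is the diagonal one, with $A$ acting on $C_B$ through the extension and on $\Z[A]$ by translation; the standard untwisting isomorphism $a\otimes c\mapsto a\otimes a^{-1}c$ identifies it with $\Z[A]\otimes_\Z C_B^0$, where $C_B^0$ is $C_B$ carrying the trivial $A$-action. Since $C_B$ is torsion-free it is flat over the PID $\Z$, so tensoring is exact and, using a finite projective resolution of $\Z$ over $\Z[A]$ together with this flatness, one obtains $H^p(A;\Z[A]\otimes_\Z C_B^0)\cong H^p(A;\Z[A])\otimes_\Z C_B$. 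Now $A$ is a duality group of dimension $k$, so $H^p(A;\Z[A])$ vanishes for $p\neq k$ and equals the torsion-free module $C_A$ for $p=k$. Consequently $H^n(\Gamma;\Z[\Gamma])$ vanishes unless $n-\ell=k$, and for $n=k+\ell$ it equals $C_A\otimes_\Z C_B$, which is again torsion-free (the tensor product over a PID of two torsion-free modules is torsion-free). By the characterization quoted above, $\Gamma$ is a duality group of dimension $k+\ell$, with dualizing module $C_A\otimes_\Z C_B$.

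The step I expect to be the main obstacle is the precise identification of the $A$-module structure on $H^\ell(B;\Z[\Gamma])$ as the diagonal module $C_B\otimes_\Z\Z[A]$: while the vanishing in the other degrees is formal, pinning down the twisting of the $A$-action—so that the untwisting isomorphism applies and the spectral-sequence edge map is genuinely $A$-equivariant—requires care in tracking how conjugation in $\Gamma$ acts through the coset decomposition. Once that identification is in hand, the flatness and torsion-freeness bookkeeping downstream is routine.
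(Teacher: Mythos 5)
Your proof is correct and is essentially the canonical one: the paper offers no proof of this lemma, simply citing \cite[Theorem~3.5]{BiEck}, and the argument there is precisely yours --- the $H^i(G;\Z[G])$-characterization of duality groups, the one-row collapse of the Lyndon--Hochschild--Serre spectral sequence with $\Z[\Gamma]$ coefficients, and the untwisting of the diagonal module $C_B\otimes_\Z\Z[A]$, with the dualizing module $C_A\otimes_\Z C_B$. The module-structure subtlety you flag is genuine but is resolved exactly as you anticipate (conjugation by coset representatives gives a well-defined $A$-action on $H^\ell(B;\Z[B])$, and your finitely-generated-projective-resolution argument correctly avoids any appeal to Shapiro's lemma for induced versus coinduced modules), so there is no gap.
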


\begin{lem}\label{dimAtimesZ}
If $A$ is an orientable Poincar\' e duality group, and $B$ is any group, then $\chd(A\times B) = \chd(A) + \chd(B)$.
\end{lem}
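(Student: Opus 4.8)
The plan is to prove the two inequalities separately. The upper bound $\chd(A \times B) \le \chd(A) + \chd(B)$ is exactly the sub-additivity recalled above, so the real content is the reverse inequality. Write $k = \chd(A)$, so that $A$ is an orientable $PD_k$ group, and set $n = \chd(B)$. If $n = \infty$ there is nothing to prove, since $B$ embeds as a subgroup of $A \times B$ and cohomological dimension is monotone under passage to subgroups; so I would assume $n < \infty$. It then suffices to exhibit a single coefficient module $M$ for $A \times B$ with $H^{k+n}(A \times B; M) \ne 0$.

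By definition of $n = \chd(B)$, I would choose a $\Z[B]$-module $N$ with $H^n(B; N) \ne 0$, view $N$ as a $\Z[A\times B]$-module by letting the factor $A$ act trivially, and take this to be $M$. The main tool is the Lyndon--Hochschild--Serre spectral sequence of the split extension $1 \to B \to A \times B \to A \to 1$,
\[
E_2^{p,q} = H^p\big(A; H^q(B; N)\big) \Longrightarrow H^{p+q}(A \times B; N).
\]
Because the extension is a direct product, $A$ acts trivially by conjugation on $B$, and since $A$ was arranged to act trivially on $N$ as well, each coefficient group $H^q(B; N)$ carries the trivial $A$-action.

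The heart of the argument is to show that the corner term $E_2^{k,n}$ is nonzero and survives to $E_\infty$. For nonvanishing, orientable Poincar\'e duality gives $H^k(A; L) \cong H_0(A; L) = L$ for any $A$-module $L$ with trivial action; applying this to $L = H^n(B; N)$ yields $E_2^{k,n} \cong H^n(B; N) \ne 0$. For survival, note that $H^p(A; -) = 0$ for $p > k$ since $\chd(A) = k$, and $H^q(B; -) = 0$ for $q > n$ since $\chd(B) = n$. The outgoing differential $d_r \co E_r^{k,n} \to E_r^{k+r,\, n-r+1}$ lands in column $k + r > k$ and so is zero, while the incoming differential $d_r \co E_r^{k-r,\, n+r-1} \to E_r^{k,n}$ originates in row $n + r - 1 > n$ (as $r \ge 2$) and so is zero. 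Hence $E_\infty^{k,n} = E_2^{k,n} \ne 0$, which forces $H^{k+n}(A\times B; N) \ne 0$ and completes the lower bound.

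I expect the main obstacle to be conceptual rather than computational: one must choose the coefficients on $A \times B$ so that duality on the $A$-factor and the dimension of the $B$-factor can be read off simultaneously from a single spectral sequence, and then verify that the relevant corner term is insulated from all differentials. Once $M$ is taken to be $N$ with trivial $A$-action, the two vanishing ranges $p \le k$ and $q \le n$ pin the surviving term to the corner automatically, so the only genuinely delicate point is the identification $E_2^{k,n} \cong H^n(B;N)$. Here orientability of $A$ is essential, since for a non-orientable $PD_k$ group the top cohomology would be computed with a twist by the orientation module and could vanish even on a nonzero trivial module (for instance on a divisible one).
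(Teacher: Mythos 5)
Your proof is correct and follows essentially the same route as the paper's: both arguments take a $\Z[B]$-module $N$ with $H^n(B;N)\neq 0$, inflate it to $A\times B$ with trivial $A$-action, run the Lyndon--Hochschild--Serre spectral sequence of $1\to B\to A\times B\to A\to 1$, and use orientable Poincar\'e duality in the form $H^k(A;L)\cong H_0(A;L)\cong L$ to see that the corner term $E_2^{k,n}\cong H^n(B;N)$ is nonzero and untouched by differentials. Your extra touches---dispatching the case $\chd(B)=\infty$ via monotonicity for subgroups and spelling out why both the incoming and outgoing differentials at $(k,n)$ vanish---simply make explicit details the paper leaves implicit.
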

\begin{proof}
Let $M$ be a $\Z[B]$-module such that $H^\ell(B;M)\neq 0$, where $\ell=\chd(B)$. We will consider the Lyndon--Hochschild--Serre spectral sequence of the trivial extension
\[
\xymatrix{
1 \ar[r] & B\ar[r] & A\times B \ar[r] & A \ar[r] & 1
}
\]
which has $E_2^{p,q} = H^p(A ; H^q(B;M))$ and converges to $H^\ast(A\times B ; M)$. Here we regard $M$ as a $\Z[A\times B]$-module via the homomorphism $\Z[A\times B]\to \Z[B]$.

Letting $k=\chd(A)$, we see that
$$
E_2^{k,\ell} = H^k(A; H^\ell(B;M)) \cong H_0(A;H^\ell(B;M)) \cong H^\ell(B;M),
$$
 because $A$ acts trivially on $H^\ell(B;M)$. There can be no nonzero differentials associated with this group, and so our assumptions on the dimensions of $A$ and $B$ imply that $H^{k+\ell}(A\times B;M)\cong E_{\infty}^{k,\ell} \cong E_2^{k,\ell}\neq 0$. Hence $\chd(A\times B)\ge k+\ell$, as required.
\end{proof}

\section{Examples}

In this section we present three examples illustrating how our Theorem \ref{main1} may be applied for various groups. In the first two examples (right-angled Artin groups and pure braid groups) we recover sharp lower bounds, originally obtained using zero-divisors cup-length. In the final example we consider the link complement of the Borromean rings. Here our lower bound improves on the zero-divisors cup-length, and recovers the bound of \cite[Example 4.3]{G1} which was obtained using sectional category weight and Massey products.

\subsection{Right-angled Artin groups}

Right-angle Artin groups form a rich class of infinite groups, including free groups of finite rank and finitely-generated abelian groups. To any finite simple graph $\Gamma$, one can associate a right-angled Artin group $G_\Gamma$ as follows. The group $G_\Gamma$ has a finite presentation, with one generator $x_i$ for each vertex $v_i\in \mathcal{V}(\Gamma)$, and one relation $x_ix_j=x_jx_i$ for each edge $\{v_i,v_j\}\in\mathcal{E}(\Gamma)$. Thus $G_\Gamma$ has a finite presentation in which all relators are commutators.

The topological complexity of right-angle Artin groups was computed by Cohen and Pruidze in  \cite{C-P08}.

\begin{thm}\cite[Theorem 4.1]{C-P08}
For any finite simplicial graph $\Gamma$, the topological complexity of the associated right-angled Artin group is given by $\TC(G_\Gamma)=z(\Gamma)$, where
 \[
 z(\Gamma) = \max_{K_1,K_2}\left| \mathcal{V}(K_1)\cup \mathcal{V}(K_2)\right|
 \]
 is the maximum number of vertices spanned by precisely two cliques in $\Gamma$.
\end{thm}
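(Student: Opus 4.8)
Since the equality is due to Cohen and Pruidze --- and in particular the upper bound $\TC(G_\Gamma)\le z(\Gamma)$ is part of \cite[Theorem 4.1]{C-P08} --- the plan is to recover only the lower bound $\TC(G_\Gamma)\ge z(\Gamma)$, whose original proof went through zero-divisors cup-length, by a single application of \thmref{main1}. The task thus reduces to exhibiting subgroups $A,B\le G_\Gamma$ that satisfy the conjugacy hypothesis of \thmref{main1} and realize $\chd(A\times B)=z(\Gamma)$.

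First I would fix cliques $K_1,K_2$ of $\Gamma$ attaining the maximum, so that $z(\Gamma)=|\mathcal{V}(K_1)\cup\mathcal{V}(K_2)|$, and set $V_A=\mathcal{V}(K_1)$ and $V_B=\mathcal{V}(K_2)\setminus\mathcal{V}(K_1)$. These vertex sets are disjoint, each spans a clique (an induced subgraph of a complete graph is complete), and $|V_A|+|V_B|=|\mathcal{V}(K_1)\cup\mathcal{V}(K_2)|=z(\Gamma)$. Let $A=\langle x_i : v_i\in V_A\rangle$ and $B=\langle x_i : v_i\in V_B\rangle$. Since the generators indexed by a clique pairwise commute, $A\cong\Z^{|V_A|}$ and $B\cong\Z^{|V_B|}$ are free abelian and hence orientable Poincar\'e duality groups, so \lemref{dimAtimesZ} (or simply $A\times B\cong\Z^{|V_A|+|V_B|}$) gives $\chd(A\times B)=|V_A|+|V_B|=z(\Gamma)$.

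The crux is verifying $gAg^{-1}\cap B=\{1\}$ for every $g$, equivalently that no nontrivial element of $A$ is conjugate in $G_\Gamma$ to an element of $B$. Here I would invoke the normal-form theory for right-angled Artin groups: every element is conjugate to a cyclically reduced one, and the support of a cyclically reduced element (the set of generators it involves) is an invariant of its conjugacy class. An element $\prod x_i^{n_i}$ of the clique subgroup $A$ is already cyclically reduced, with support $\{v_i : n_i\neq 0\}\subseteq V_A$, since its letters commute and no cancellation occurs; the same holds for $B$. Hence a nontrivial $a\in A$ conjugate to some $b\in B$ would force $\operatorname{supp}(a)=\operatorname{supp}(b)\subseteq V_A\cap V_B=\emptyset$, whence $a=1$, a contradiction. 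With the hypothesis confirmed, \thmref{main1} yields $\TC(G_\Gamma)\ge\chd(A\times B)=z(\Gamma)$, which together with the cited upper bound completes the proof.

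I expect the one genuinely nontrivial input to be the conjugacy-invariance of support, together with the observation that clique-subgroup elements are already cyclically reduced; both are facts about the normal-form calculus of graph groups and have nothing to do with topological complexity itself. The remaining steps --- the clique bookkeeping and the cohomological-dimension count via \lemref{dimAtimesZ} --- are routine.
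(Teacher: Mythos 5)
Your proof is correct, and its overall skeleton is the paper's: reduce to two disjoint cliques, take the free abelian subgroups $A$ and $B$ they span, note $\chd(A\times B)=|V_A|+|V_B|=z(\Gamma)$, feed the pair into Theorem~\ref{main1} for the lower bound, and cite \cite{C-P08} for the upper bound (the paper likewise only reproves the lower bound). The one step where you genuinely diverge is the verification of the hypothesis $gAg^{-1}\cap B=\{1\}$. You import the normal-form calculus of graph groups --- every element is conjugate to a cyclically reduced one, the support of a cyclically reduced element is a conjugacy invariant, and clique-subgroup elements are already cyclically reduced --- three true but nontrivial facts, the second of which amounts to citing the solution of the conjugacy problem in right-angled Artin groups (Wrathall, Servatius). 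The paper instead uses a completely elementary retraction: the assignment $x_j\mapsto x_j$ for $v_j\in K_2$ and $x_j\mapsto 1$ otherwise defines a homomorphism $\varphi\co G_\Gamma\to B$ (well defined because every relator is a commutator and the surviving generators pairwise commute, $K_2$ being a clique), and since the generators of $A$ lie outside $K_2$ one gets $\varphi(gAg^{-1})=\varphi(g)\,\{1\}\,\varphi(g)^{-1}=\{1\}$ while $\varphi|_B=\mathrm{id}$, forcing $gAg^{-1}\cap B=\{1\}$. The retraction argument needs nothing beyond the presentation, and it extends verbatim to any two induced-subgraph subgroups on disjoint vertex sets, so your heavier machinery buys no extra generality here; on the other hand, your route makes transparent \emph{why} the hypothesis holds (disjoint supports cannot be conjugate), which is a useful conceptual takeaway. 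Your handling of the disjointness reduction, replacing $K_2$ by $K_2\setminus K_1$, matches the paper's remark that the maximizing cliques may be assumed disjoint.
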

Recall that a clique $K$ in $\Gamma$ is a complete subgraph. By an abuse of notation we conflate a clique with its set of vertices. So we may write, for instance, $K=\{ v_{i_1},\ldots , v_{i_m}\}$ and $|K|=m$. Note that in the above Theorem, the two cliques $K_1$ and $K_2$ attaining $z(\Gamma)$ may, without loss of generality, be taken to be disjoint.

Here we observe that the lower bound of $\TC(G_\Gamma)\ge z(\Gamma)$, which was obtained in \cite{C-P08} using zero-divisors calculations, follows easily from our Theorem \ref{main1}.

\begin{prop}
Let $K_1$ and $K_2$ be disjoint cliques in $\Gamma$. Then $\TC(G_\Gamma)\ge |K_1|+|K_2|$.
\end{prop}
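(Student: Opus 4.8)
The plan is to apply Theorem \ref{main1} with $A$ and $B$ taken to be the parabolic subgroups generated by the vertices of the two cliques. Write $A = \langle x_i : v_i \in K_1\rangle$ and $B = \langle x_i : v_i \in K_2\rangle$. Since $K_1$ is a clique, every pair of its generators commutes and there are no further relations among them, so $A$ is free abelian of rank $|K_1|$; likewise $B\cong \Z^{|K_2|}$. Consequently the abstract direct product $A\times B$ is free abelian of rank $|K_1|+|K_2|$, and hence $\chd(A\times B)=|K_1|+|K_2|$. Thus the whole proposition reduces to verifying the single hypothesis of Theorem \ref{main1}, namely that $gAg^{-1}\cap B=\{1\}$ for every $g\in G_\Gamma$.

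For this conjugacy condition I would exploit the abundance of homomorphisms from $G_\Gamma$ to abelian groups. For each vertex $v_i$, let $\phi_i\co G_\Gamma\to \Z$ be the homomorphism sending $x_i\mapsto 1$ and $x_j\mapsto 0$ for $j\neq i$; this is well-defined because every defining relator is a commutator and so maps to $0$. Being a homomorphism into an abelian group, each $\phi_i$ is invariant under conjugation, so that $\phi_i(gag^{-1})=\phi_i(a)$ for all $a,g$. Now suppose $x\in gAg^{-1}\cap B$, say $x=gag^{-1}$ with $a\in A$ and $x\in B$. For each $v_i\in K_2$ we then have $\phi_i(x)=\phi_i(a)=0$, since $a$ is a product of generators indexed by $K_1$ and $K_1\cap K_2=\emptyset$. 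But $x\in B\cong\Z^{|K_2|}$ is completely determined by the integers $\phi_i(x)$ with $v_i\in K_2$, so all these vanishing forces $x=1$. This establishes the hypothesis.

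Granting the hypothesis, Theorem \ref{main1} immediately yields $\TC(G_\Gamma)\ge \chd(A\times B)=|K_1|+|K_2|$, as desired. The only genuine point requiring care is the conjugacy condition; the identification of $A$ and $B$ as free abelian groups and the computation of $\chd(A\times B)$ are routine consequences of the clique hypothesis. I expect the conjugacy condition to be the main obstacle in general, but here it is defused by the elementary observation that conjugation cannot alter the exponent sums recorded by the coordinate homomorphisms $\phi_i$, combined with the disjointness $K_1\cap K_2=\emptyset$.
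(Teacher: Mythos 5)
Your proof is correct and is essentially the paper's argument: the paper verifies the hypothesis $gAg^{-1}\cap B=\{1\}$ via the retraction $\varphi\co G_\Gamma\to B$ killing the generators outside $K_2$, and your coordinate homomorphisms $\phi_i$ are just this retraction read off in abelianization coordinates (conjugation-invariance because the target is abelian, $A$ dies by disjointness, $B$ is detected faithfully). The remaining steps --- $A\cong\Z^{|K_1|}$, $B\cong\Z^{|K_2|}$, $\chd(A\times B)=|K_1|+|K_2|$, then Theorem~\ref{main1} --- match the paper exactly.
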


\begin{proof}
Suppose that $K_1=\{ v_{i_1},\ldots , v_{i_m}\}$ and $K_2 = \{ v_{j_1},\ldots , v_{j_n} \}$ are disjoint cliques, and let $A=\langle x_{i_1},\ldots , x_{i_m}\rangle$ and $B=\langle x_{j_1},\ldots , x_{j_n} \rangle$ be the free abelian subgroups spanned by the corresponding generators of $G_\Gamma$. The inclusion $B\hookrightarrow G_\Gamma$ admits a retraction $\varphi\co G_\Gamma\to B$ which sends $x_j$ to $0$ if $v_j\notin K_2$, and is the identity on $B$. Note that $\varphi(gAg^{-1})=\{1\}$ for every $g\in G$. It follows that $gAg^{-1}\cap B=\{1\}$. Thus Theorem \ref{main1} gives $$\TC(G_\Gamma)\ge\chd(A\times B) = \chd(\Z^m\times\Z^n) = m+n,$$ as required.
\end{proof}

\subsection{Pure braid groups}

Let $\PB_n$ denote the pure braid group on $n\ge2$ strands. We regard a braid in the standard way as an isotopy class of $n$ non-intersecting strands in $\R^3$, monotonic in the $z$-coordinate and connecting $n$ distinct points in the plane $z=1$ with the corresponding points in the plane $z=0$. The group operation is given by concatenating braids and re-scaling. We may depict a braid\footnote{Courtesy of Andrew Stacey's \texttt{braids} package (\texttt{http://www.ctan.org/pkg/braids}).} by a diagram such as in Figure \ref{1}.

\begin{figure}
\begin{tikzpicture}[scale=0.7]
\braid[thick] a_1 a_2a_2 a_3a_3a_1 ;
\end{tikzpicture}
\caption{A pure braid. \label{1}}
\end{figure}
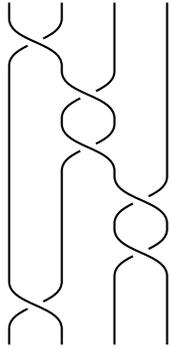

Recall that we may take as a $K(\PB_n,1)$ space the complement of the standard braid arrangement in $\C$, otherwise known as the configuration space
\[
F(\C,n) = \{ (z_1,\ldots , z_n) \in \C^n \mid i\neq j \implies z_i\neq z_j \}.
\]
The topological complexity of this space was computed in \cite{FY} to be $2n-3$. The lower bound can be obtained by considering the zero-divisors cup-length in cohomology with rational coefficients. The upper bound follows from the existence of a homeomorphism $F(\C,n) \approx \C^* \times M$, where $M$ is a complex of homotopy dimension $n-2$, together with the product formula \cite[Theorem 11]{Far03} and the standard dimensional upper bound $\TC(M)\le 2\,\dim(M)$. Note that $\chd(\PB_n)=n-1$ (see \cite{Arn}, for example).

Here we will recover the lower bound $\TC(\PB_n)\ge 2n-3$, using Theorem \ref{main1} instead of a zero-divisors calculation.

\begin{prop}
We have $\TC(\PB_n) \ge 2n-3$ for all $n\ge 2$.
\end{prop}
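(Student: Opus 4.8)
The plan is to apply \thmref{main1} with two free abelian subgroups of $\PB_n$ whose cohomological dimensions realize the splitting $2n-3 = (n-1)+(n-2)$. The obvious candidates---pure braid groups on complementary subsets of the strands---fail, since any two such subgroups share the braids supported on the strands common to both subsets, and these give nontrivial conjugate (indeed equal) elements. Instead I would use the ``diagonal'' abelian subgroups coming from the centers of the nested subgroups $\PB_2 \subseteq \PB_3 \subseteq \cdots \subseteq \PB_n$. For $2 \le k \le n$ let $z_k = A_{1k}A_{2k}\cdots A_{k-1,k}$ be the pure braid looping strand $k$ once around strands $1,\dots,k-1$, and for $1 \le k \le n-1$ let $z'_k = A_{k,k+1}A_{k,k+2}\cdots A_{k,n}$ loop strand $k$ around strands $k+1,\dots,n$. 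A standard computation (or the strand-reversal symmetry $i \mapsto n+1-i$) shows that the $z_k$ pairwise commute, and likewise the $z'_k$, so that $A = \langle z_2,\dots,z_n\rangle \cong \Z^{n-1}$ and $B = \langle z'_1,\dots,z'_{n-2}\rangle \cong \Z^{n-2}$.

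Granting this, the cohomological dimension computation is immediate: $A$ is an orientable Poincar\'e duality group, so \lemref{dimAtimesZ} gives $\chd(A\times B) = \chd(A) + \chd(B) = (n-1)+(n-2) = 2n-3$, and \thmref{main1} will yield $\TC(\PB_n) \ge 2n-3$ once its hypothesis is verified.

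The crux, and the step I expect to be the main obstacle, is checking that $g A g^{-1}\cap B = \{1\}$ for all $g$. Here I would exploit that the linking numbers $\mathrm{lk}_{ij}\co \PB_n \to \Z$ (the coordinates of the abelianization $\Phi\co\PB_n \to \Z^{\binom n2}$) are invariant under conjugation. Since $\mathrm{lk}_{ij}(z_k) = 1$ exactly when $j=k$ (and $i<k$) and vanishes otherwise, every element of $A$ has a linking profile depending only on the larger index $j$; dually, every element of $B$ has a profile depending only on the smaller index $i$ and vanishing whenever $i = n-1$. A short linear-algebra argument then shows $\Phi(A)\cap\Phi(B) = \{0\}$, the omission of the generator $z'_{n-1}$ from $B$ being exactly what rules out the common ``full-twist'' direction shared by the two lattices. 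As $\Phi$ is injective on $A$, any element of $g A g^{-1}\cap B$ has trivial image under $\Phi$ and is therefore trivial, which establishes the non-conjugacy hypothesis; the bound $\TC(\PB_n)\ge 2n-3$ then follows directly from \thmref{main1}.
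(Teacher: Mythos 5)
Your proof is correct, and it takes a genuinely different route from the paper's, even though both rest on \thmref{main1} together with the conjugation-invariance of linking numbers. The paper takes $A=\langle \alpha_1,\dots,\alpha_{n-1}\rangle\cong\Z^{n-1}$, where $\alpha_j$ is your $z'_j$ (strand $j$ looping around strands $j+1,\dots,n$), but pairs it with $B=\PB_{n-1}\le\PB_n$, the image of the inclusion adding a non-interacting $n$-th strand; it then verifies $gAg^{-1}\cap B=\{1\}$ geometrically, by closing braids to ordered links and observing that conjugate braids have isotopic closures, so an element of $A$ conjugate into $B$ has all linking numbers $\mathrm{lk}(L_i,L_n)$ zero and is hence trivial. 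Your choice of two \emph{abelian} subgroups $A\cong\Z^{n-1}$, $B\cong\Z^{n-2}$ buys a real simplification at the dimension-counting step: $A\times B\cong\Z^{2n-3}$, so $\chd(A\times B)=2n-3$ is immediate, and you do not need $\chd(\PB_{n-1})=n-2$ nor any duality-group additivity (indeed your appeal to \lemref{dimAtimesZ} is unnecessary overkill for a free abelian product). The paper, by contrast, needs the additivity $\chd(\Z^{n-1}\times\PB_{n-1})=(n-1)+(n-2)$, which implicitly uses \lemref{dimAtimesZ}. Your conjugacy verification via the abelianization $\Phi\co\PB_n\to\Z^{\binom n2}$ is the algebraic shadow of the paper's closure argument (the coordinates of $\Phi$ \emph{are} the linking numbers), and your linear algebra checks out: equating profiles forces $m_n=p_{n-1}=0$ at the coordinate $(n-1,n)$, whence all $p_i=0$ and then all $m_j=0$, so $\Phi(A)\cap\Phi(B)=0$; injectivity of $\Phi$ on $A$ (and on $B$) then kills any element of $gAg^{-1}\cap B$. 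You were also right to flag the full twist: $z_2\cdots z_n=z'_1\cdots z'_{n-1}$ is central, so retaining $z'_{n-1}$ would put a common (central, hence self-conjugate) element in both lattices, and dropping it is exactly what makes the hypothesis of \thmref{main1} hold. Two small points: your claim that the $z_k$ pairwise commute deserves at least the one-line justification that $z_k$ is the boundary loop in the splitting $\PB_k\cong F_{k-1}\rtimes\PB_{k-1}$ and so commutes with $\PB_{k-1}\ni z_j$ for $j<k$ (the paper is no more rigorous, arguing its commutations by picture); and your linking-number verification of independence actually makes the ranks of $A$ and $B$ more rigorous than the paper's appeal to torsion-freeness alone.
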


\begin{proof}
We will identify subgroups $A$ and $B$ of $\PB_n$ such that $gAg^{-1}\cap B=\{1\}$ for all $g\in\PB_n$, and $\chd(A\times B)=2n-3$.

For each $1\le j\le n-1$, define a braid $\alpha_j\in\PB_n$ as follows. Geometrically, the braid $\alpha_j$ runs the $j$-th strand in front of the last $n-j$ strands, then loops back and passes behind the last $n-j$ strands to its original position. This is depicted in Figure \ref{3a}. It is easy to see geometrically that these elements commute pairwise, as illustrated by Figure \ref{3b}.

\begin{figure}
\begin{tikzpicture}[scale=0.7]
\braid[number of strands=4,thick,style strands={1}{red}] a_1a_2a_3a_3a_2a_1;
\end{tikzpicture}
\caption{The braid $\alpha_1\in\PB_4$. \label{3a}}
\end{figure}

\begin{figure}
\begin{tikzpicture}[scale=0.7]
\braid[number of strands=4,thick,style strands={3}{green}, style strands={1}{red}](b2b4) at (0,0) a_1a_2a_3a_3a_2a_1a_3a_3;
\node at (4, -3.5){$=$};
\braid[number of strands=4,thick,style strands={3}{green}, style strands={1}{red}](b4b2) at (5,0) a_3a_3a_1a_2a_3a_3a_2a_1;
\end{tikzpicture}
\caption{The relation $\alpha_1\alpha_3=\alpha_3\alpha_1\in\PB_4$. \label{3b}}
\end{figure}
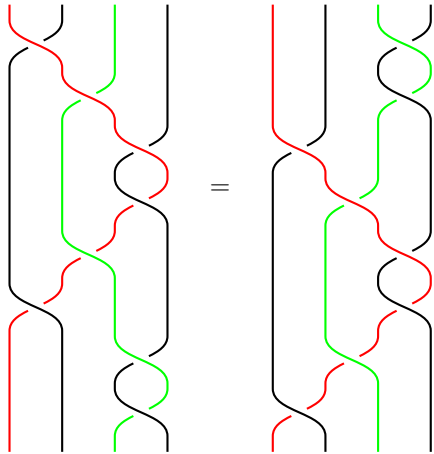

Since the pure braid groups are torsion-free, the $\alpha_j$ generate a free abelian subgroup $A=\langle \alpha_1,\ldots ,\alpha_{n-1}\rangle\le \PB_n$ of rank $n-1$.

Now recall that there is a canonical inclusion $\PB_{n-1}\hookrightarrow\PB_n$, given by introducing an $n$-th non-interacting strand to the right of the other strands. We let $B\le \PB_n$ denote the image of this inclusion, and note that $\chd(B)=\chd(\PB_{n-1})=n-2$.

We claim that $gAg^{-1}\cap B = \{1\}$ for every $g\in \PB_n$. To see this, recall that any pure braid on $n$ strands may be \emph{closed} to form an ordered link of $n$ components. If two braids $\alpha$ and $\beta$ are conjugate in $\PB_n$, then their closures are isotopic as ordered links. Suppose $\alpha\in A$ is conjugate to $\beta\in B$. Then the closure of $\alpha$ is an ordered link in which each component $L_1,\ldots ,L_{n-1}$ has trivial linking number with the final component $L_n$. On the other hand, it is easy to see that in terms of the given basis for $A$ we have $\alpha=\alpha_1^{k_1}\cdots \alpha_{n-1}^{k_{n-1}}$, where $k_i\in\Z$ is the linking number of $L_i$ with $L_n$. Therefore $\alpha=1$, and the conjugates of $A$ intersect $B$ trivially, as claimed.

Finally we apply Theorem $\ref{main1}$, to obtain
\[
\TC(\PB_n)\ge \chd(A\times B) = \chd(\Z^{n-1}\times \PB_{n-1}) = (n-1) + (n-2) = 2n-3. \qedhere
\]
\end{proof}

\begin{rem}
The above approach was suggested by viewing the pure braid groups as iterated semi-direct products of free groups, and was originally proved using the explicit presentation of $\PB_n$ given by Artin in \cite{Art}. We have found this geometric approach, discovered later, to be more appealing.
\end{rem}

\subsection{The Borromean rings}

Let $X$ be the link complement of the Borromean rings in $S^3$ (see Figure \ref{4}). Then $X$ is a compact $3$-manifold with boundary, which is aspherical by virtue of the fact that it admits a complete hyperbolic metric of finite volume \cite{Thu}. The fundamental group $G=\pi_1(X)$ admits a finite presentation
\[
G = \langle a,b,c \mid [a,[b^{-1},c]],\, [b,[c^{-1},a]] \rangle,
\]
obtained using the Wirtinger method. Since $X$ deformation retracts onto a $2$-dimensional complex, we have $\cat(G)=\cat(X)\le 2$. On the other hand, $G$ is not free, and so $\cat(G)=\chd(G)=2$. We therefore have $2\le \TC(G)\le 4$. The fact that the rings are pairwise unlinked implies that cup products in $\widetilde{H}^\ast(X;k)$ are all trivial, for any field $k$. The zero-divisors cup-length is therefore less than or equal to $2$ (and in fact is equal to $2$, as whenever $u$ and $v$ are linearly independent cohomology classes in $\widetilde{H}^\ast(X;k)$ the product of zero-divisors $(1\times u-u\times 1)(1\times v-v\times 1)$ is nonzero). We will use our Theorem \ref{main1} to show that $\TC(G)\ge 3$. Our aim, therefore, is to find two subgroups $A$ and $B$ of $G$ whose conjugates intersect trivially and such that $\chd(A\times B)=3$.

\begin{figure}
\begin{tikzpicture}[scale =.5]
\fill [white] (-5,-4.2)rectangle (5,5.2);
\draw [blue, thick] (1.732,-1) circle (3);

\begin{scope}
\clip(0,1.45) circle (.5);
\draw [white, ultra thick] (1.732,-1) circle (3);
\end{scope}
\begin{scope}
\clip(0,-3.45) circle (.5);
\draw [white, ultra thick] (1.732,-1) circle (3);
\end{scope}

\draw [green, thick] (-1.732,-1) circle (3);

\begin{scope}
\clip(-2.987,1.725) circle (.5);
\draw [white, ultra thick] (-1.732,-1) circle (3);
\end{scope}
\begin{scope}
\clip(1.255,-0.725) circle (.5);
\draw [white, ultra thick] (-1.732,-1) circle (3);
\end{scope}

\draw [red, thick] (0,2) circle (3);

\begin{scope}
\clip(-1.255,-0.725) circle (.5);
\draw [white, ultra thick] (0,2) circle (3);
\end{scope}
\begin{scope}
\clip  (2.987,1.725) circle (.5);
\draw [white, ultra thick] (0,2) circle (3);
\end{scope}
\begin{scope}
\clip(-1.255,-0.725) circle (.5);
\draw [blue, thick] (1.732,-1) circle (3);
\end{scope}
\begin{scope}
\clip  (2.987,1.725) circle (.5);
\draw [blue, thick] (1.732,-1) circle (3);
\end{scope}

\end{tikzpicture}
\caption{The Borromean rings. \label{4}}
\end{figure}
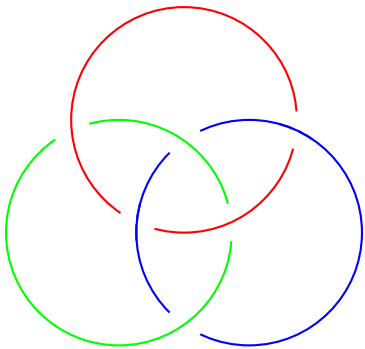

In order to do so, we decompose $G$ as a semidirect product, as follows. Recall that removing any one component of the Borromean rings results in the unlink of two components. Suppose we remove the component corresponding to the generator $c$. Then the induced map on fundamental groups of link complements is a homomorphism
\[
p\co G\to F(\alpha,\beta),\qquad a\mapsto \alpha,\quad b\mapsto \beta, \quad c\mapsto 1,
\]
where $F(\alpha,\beta)$ is a free group on two generators $\alpha$ and $\beta$. Since $p$ is clearly onto, and any surjective homomorphism to a free group splits, there results a split extension
\[
\xymatrix{
 1 \ar[r] &  K\ar[r] & G\ar[r]^-p &F(\alpha,\beta) \ar[r] & 1.
}
\]

\begin{lem}
Let $A=\langle a\rangle$ be the infinite cyclic subgroup of $G$ generated by $a$, and let $B=p^{-1}\langle\beta\rangle$, where $\langle \beta \rangle\le F(\alpha,\beta)$ denotes the infinite cyclic subgroup generated by $\beta$. Then for every $g\in G$ we have $gAg^{-1}\cap B=\{1\}$.
\end{lem}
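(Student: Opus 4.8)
The plan is to push the entire question forward along the homomorphism $p\co G\to F(\alpha,\beta)$ and thereby reduce it to an elementary statement about conjugacy in a free group. The key observation is that both the subgroup $A=\langle a\rangle$ and the subgroup $B=p^{-1}\langle\beta\rangle$ are controlled by $p$: the latter by its very definition, and the former because $p(a)=\alpha$. So all the work should happen downstairs in $F(\alpha,\beta)$.

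Concretely, I would begin by fixing an arbitrary $g\in G$ and an arbitrary element $x\in gAg^{-1}\cap B$. Since $A$ is infinite cyclic (here I use that $G$ is torsion-free, so $a$ has infinite order and $A\cong\Z$), I can write $x=ga^kg^{-1}$ for some $k\in\Z$, and it suffices to prove $k=0$. Applying $p$ and using $p(a)=\alpha$ gives $p(x)=p(g)\,\alpha^k\,p(g)^{-1}$. On the other hand, membership $x\in B=p^{-1}\langle\beta\rangle$ means precisely that $p(x)\in\langle\beta\rangle$, say $p(x)=\beta^n$. Combining these, $\alpha^k$ is conjugate in $F(\alpha,\beta)$ to $\beta^n$.

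The crux is then the standard conjugacy criterion for free groups: two elements are conjugate if and only if their cyclically reduced forms agree up to cyclic permutation. Both $\alpha^k$ and $\beta^n$ are already cyclically reduced, and any cyclic rotation of $\alpha^k$ is again $\alpha^k$; since $\alpha^k$ (involving only $\alpha$) and $\beta^n$ (involving only $\beta$) can coincide only when both are the empty word, conjugacy forces $k=n=0$. Hence $k=0$ and $x=ga^0g^{-1}=1$, which establishes $gAg^{-1}\cap B=\{1\}$ for every $g$.

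I do not anticipate a serious obstacle here; the single substantive ingredient is the free-group conjugacy fact, and the main conceptual point is simply recognizing that $p$ transports the trivial-intersection condition cleanly to the free quotient, where distinct basis elements exhibit independent conjugacy behaviour. The only items meriting a word of care are the appeal to torsion-freeness of $G$ (so that $A\cong\Z$ and $x=1$ genuinely follows from $k=0$) and the fact that passing to the possibly large preimage $B=p^{-1}\langle\beta\rangle$ causes no difficulty, since we test membership in $B$ solely through its image under $p$.
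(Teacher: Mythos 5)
Your proof is correct and follows essentially the same route as the paper's: both push everything forward along $p$ and reduce to the fact that in $F(\alpha,\beta)$ no nontrivial conjugate of a power of $\alpha$ lies in $\langle\beta\rangle$. The only cosmetic differences are that the paper finishes by noting $x\in K=\ker p$ and using normality of $K$ to get $a^n\in K$, hence $n=0$, whereas you track the exponent $k$ directly and spell out the cyclically-reduced-word criterion the paper leaves implicit (and your appeal to torsion-freeness of $G$ is unnecessary, since $k=0$ already forces $x=ga^{0}g^{-1}=1$).
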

\begin{proof}
Suppose $x\in gAg^{-1}\cap B \le G$. Then $p(x)\in p(g)Ap(g)^{-1}\cap \langle\beta\rangle = \{1\}\le F(\alpha,\beta)$. Therefore $x=ga^{n}g^{-1}\in K$, for some $n\in \Z$. Since $K$ is normal, it follows that $a^n\in K$, and so $n=0$ and $x=1$.
\end{proof}

\begin{prop}\label{rings}
We have $\TC(G)\ge 3$, where $G$ denotes the fundamental group of the Borromean rings link complement.
\end{prop}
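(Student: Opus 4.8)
The plan is to apply Theorem \ref{main1} directly to the subgroups $A = \langle a \rangle$ and $B = p^{-1}\langle \beta \rangle$ furnished by the preceding Lemma, which has already verified the hypothesis $gAg^{-1} \cap B = \{1\}$ for every $g \in G$. This at once yields $\TC(G) \ge \chd(A \times B)$, so the whole task reduces to computing $\chd(A \times B)$ and checking that it equals $3$. Since $A \times B$ is not a subgroup of $G$, there is no way to read this dimension off directly from $G$; instead I would split it using the duality machinery of Section~2.

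To evaluate $\chd(A \times B)$, first observe that $A = \langle a \rangle \cong \Z$ is an orientable Poincar\'e duality group of dimension $1$, being $\pi_1(S^1)$. Lemma \ref{dimAtimesZ} then applies with no hypothesis on the second factor, giving $\chd(A \times B) = \chd(A) + \chd(B) = 1 + \chd(B)$. Hence it suffices to prove $\chd(B) = 2$. For the upper bound, $B$ is a subgroup of $G$ and cohomological dimension is monotone under passage to subgroups, so $\chd(B) \le \chd(G) = 2$.

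The crux is therefore the matching lower bound $\chd(B) \ge 2$, which I would establish by exhibiting a copy of $\Z^2$ inside $B$. The natural candidate is the peripheral subgroup of the cusp corresponding to the $b$-component, namely $P = \langle b, [c^{-1}, a] \rangle$. Because $X$ is a finite-volume hyperbolic $3$-manifold, each cusp contributes a peripheral subgroup isomorphic to $\Z^2$ that injects into $G$; here $b$ is the meridian and $[c^{-1}, a]$ the corresponding longitude, their commutativity being exactly the defining relation $[b,[c^{-1},a]] = 1$. I would then check $P \le B$: indeed $p(b) = \beta \in \langle \beta \rangle$, while $p([c^{-1}, a]) = [1, \alpha] = 1 \in \langle \beta \rangle$, so both generators lie in $p^{-1}\langle \beta \rangle = B$. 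Thus $\Z^2 \cong P \le B$ and $\chd(B) \ge \chd(\Z^2) = 2$.

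Combining the two bounds gives $\chd(B) = 2$, hence $\chd(A \times B) = 3$ and $\TC(G) \ge 3$, as desired. The one point that requires genuine care, and which I regard as the main obstacle, is the identification of $P$ as an honest $\Z^2$ peripheral subgroup: one must confirm that the longitude of the $b$-component is (conjugate to) $[c^{-1},a]$ in this Wirtinger presentation and that the peripheral tori inject into $G$. Both facts follow from the hyperbolic structure and standard knot-theoretic considerations, so I expect only bookkeeping rather than a serious difficulty.
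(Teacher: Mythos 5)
Your proposal is correct, and it follows the paper's proof exactly up to the final step: both apply Theorem \ref{main1} with the subgroups $A=\langle a\rangle$ and $B=p^{-1}\langle\beta\rangle$ from the preceding Lemma, reduce via Lemma \ref{dimAtimesZ} to showing $\chd(B)=2$, and get the upper bound from $\chd(B)\le\chd(G)=2$. Where you diverge is the lower bound $\chd(B)\ge 2$. The paper stays purely algebraic: it observes that $b$ and $[c^{-1},a]$ lie in $B$ and satisfy the relation $[b,[c^{-1},a]]=1$, checks via $p$ that $[c^{-1},a]$ is not a power of $b$, concludes that $B$ is not free, and then invokes the Stallings--Swan theorem (groups of cohomological dimension at most one are free) to get $\chd(B)\ge 2$. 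You instead embed $\Z^2$ into $B$ as the peripheral subgroup $P=\langle b,[c^{-1},a]\rangle$ of the $b$-cusp, using the hyperbolicity of the complement (already cited in the paper via Thurston) to guarantee that the cusp torus is $\pi_1$-injective, plus the classical identification of $[c^{-1},a]$ as the longitude in this Wirtinger/Milnor presentation. Both routes are valid. The paper's argument buys independence from $3$-manifold geometry --- it needs only the presentation and the retraction $p$, at the cost of leaning on the deep Stallings--Swan theorem and tacitly on $[c^{-1},a]\neq 1$ in $G$. Your argument imports more topological input (incompressibility of the cusp tori and the longitude identification, which you rightly flag as the point needing care), but in exchange it avoids Stallings--Swan, gives the stronger structural conclusion $\Z^2\le B$, and, notably, actually \emph{justifies} the nontriviality of the longitude $[c^{-1},a]$ that the paper's proof asserts without comment; it also transfers to other link complements with incompressible boundary tori, much as the paper's remark about Brunnian links suggests.
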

\begin{proof}
By the previous Lemma combined with Theorem \ref{main1}, we have $\TC(G)\ge \chd(A\times B) = 1 + \chd(B)$. All that remains is to observe that $B$ is not a free subgroup of $G$, and so $\chd(B)=2$. To see this, note that $b$ and $[c^{-1},a]$ are both in $B=p^{-1}\langle\beta\rangle$, and that $[b,[c^{-1},a]]=1$ in $B$. However, $b^n\neq [c^{-1},a]$ for all $n\in\Z$ (since $p(b^n)=\beta^n \neq 1 = p([c^{-1},a])$ if $n\neq 0$, and $b^{0}=1 \neq [c^{-1},a]$). This non-trivial relation shows that $B$ is not a free group.
\end{proof}

\begin{rem}
Proposition \ref{rings} recovers Example 4.3 in the paper \cite{G1}, where Massey product calculations and sectional category weight are used to achieve the same lower bound (note that in that paper the non-normalized $\TC$ is used). The proof presented here is somewhat simpler and more illuminating. The method of proof also generalizes immediately to other aspherical Brunnian link complements (such as the complement of Whitehead's link).
\end{rem}

\section{Higman's group}

In our previous examples, we used Theorem \ref{main1} to arrive at lower bounds for $\TC(G)$ which can also be obtained using zero-divisors cup-length, or sectional category weight. In this section, we present a calculation of the topological complexity of a group which, to the best of our knowledge, could not be obtained using these standard techniques.

In his paper \cite{H51}, G.\ Higman gave an example of a $4$-generator, $4$-relator group with several remarkable properties. Here we recall the construction of Higman's group as an iterated amalgam, as well as those properties which are relevant for our purposes. More information may be found in \cite{H51} or \cite[Section 4]{DV73}.

Firstly, for symbols $x$ and $y$ form the group $H_{xy}$ with presentation
\[
\langle x,y \mid xyx^{-1}y^{-2} \rangle.
\]
This group is isomorphic to the Baumslag--Solitar group $B(1,2)$, and hence is a duality group of dimension $2$.

The infinite cyclic group $F(y)$ injects into both $H_{xy}$ and $H_{yz}$, and so we may form the amalgam $H_{xyz}:=H_{xy}\ast_{F(y)} H_{yz}$. Likewise we may form $H_{zwx}$ as the amalgam of $H_{zw}$ and $H_{wx}$ over $F(w)$. The free group $F(x,z)$ injects into both $H_{xyz}$ and $H_{zwx}$, and Higman's group is defined to be the amalgam $\mathcal{H}:=H_{xyz}\ast_{F(x,z)} H_{zwx}$. It has presentation
\[
P\co \langle x , y, z, w \mid xyx^{-1}y^{-2},\, yzy^{-1}z^{-2},\, zwz^{-1}w^{-2},\, wxw^{-1}x^{-2} \rangle.
\]
Note the symmetry in this presentation. Indeed, the construction of $\mathcal{H}$ as an iterated amalgam is non-unique, and below we will make use of a second amalgam decomposition $\mathcal{H}=H_{yzw}\ast_{F(y,w)} H_{wxy}$.

The group $\mathcal{H}$ is acyclic (it has the same integer homology as a trivial group), and so $\tilde{H}^\ast(\mathcal{H};k)=0$ for every abelian group $k$. Moreover, since $\mathcal{H}$ has no non-trivial finite quotients \cite{H51}, it has no non-trivial finite dimensional representations over any field. It then follows that if $M$ is any coefficient $\Z[\mathcal{H}]$-module which is finitely generated as an abelian group, then $\tilde{H}^\ast(\mathcal{H};M)=0$. Thus the group $\mathcal{H}$ is difficult to distinguish from a trivial group using cohomological invariants.

On the other hand, since $\mathcal{H}$ is not a free group we have $\chd(\mathcal{H})\ge 2$. The $2$-dimensional complex associated to the presentation $P$ is shown to be aspherical in \cite{DV73}, and it follows that $\cat(\mathcal{H})=\chd(\mathcal{H})=2$. Thus the topological complexity of Higman's group satisfies $2\le \TC(\mathcal{H})\le 4$. Note that the zero-divisors cup length over any field is zero. In this section we will prove the following result.

\begin{thm}\label{Higman}
We have $\TC(\mathcal{H})=4$.
\end{thm}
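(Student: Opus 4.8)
The plan is to deduce the lower bound $\TC(\mathcal{H})\ge 4$ from \thmref{main1} (the upper bound $\TC(\mathcal{H})\le 4$ already being in hand), by exhibiting subgroups $A$ and $B$ with $gAg^{-1}\cap B=\{1\}$ for all $g\in\mathcal{H}$ and $\chd(A\times B)=4$. The natural candidates are the two ``opposite'' Baumslag--Solitar subgroups $A=H_{xy}$ and $B=H_{zw}$. Each is isomorphic to $B(1,2)$, hence a duality group of dimension $2$, so \lemref{dualitygroups} immediately gives $\chd(A\times B)=\chd(A)+\chd(B)=2+2=4$. Everything therefore reduces to the separation statement: no nontrivial element of $H_{xy}$ is conjugate in $\mathcal{H}$ to an element of $H_{zw}$. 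I note at the outset that no retraction argument of the kind used above for right-angled Artin groups can succeed here, since no nontrivial homomorphism $\mathcal{H}\to B(1,2)$ exists ($\mathcal{H}$ is perfect, and the only perfect subgroup of $B(1,2)$ is trivial); the separation must be proved directly, using Bass--Serre theory.

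To prove separation I would use \emph{both} amalgam decompositions $\mathcal{H}=H_{xyz}\ast_{F(x,z)}H_{zwx}$ and $\mathcal{H}=H_{yzw}\ast_{F(y,w)}H_{wxy}$ together with their Bass--Serre trees. The standard facts I would invoke are: (i) if two nontrivial elements lying in distinct vertex groups of an amalgam are conjugate in the amalgam, then each is conjugate, inside its own vertex group, into the edge group (geometrically, a nontrivial elliptic element fixing two vertices of different types must fix the entire edge-path joining them); and (ii) if an element of a vertex group is not conjugate into the edge group, then its conjugacy class is rigid, meeting that vertex group in a single vertex-group conjugacy class. I would also use that an edge group such as $F(x,z)=\langle x\rangle\ast\langle z\rangle$ is free, and that its only elliptic elements are conjugates of powers of $x$ or of $z$: a cyclically reduced word involving both generators is an alternating, cyclically reduced word of amalgam-length $\ge 2$ in $H_{zwx}=H_{zw}\ast_{F(w)}H_{wx}$, hence hyperbolic.

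Granting these, suppose $a\in H_{xy}\setminus\{1\}$ is conjugate to $b\in H_{zw}$, so that $b\neq 1$. Working in the first decomposition, $b\in H_{zw}\le H_{zwx}$ is conjugate into $F(x,z)$; refining in $H_{zwx}=H_{zw}\ast_{F(w)}H_{wx}$ and discarding the impossible case (using the abelianization $B(1,2)\to\Z$, under which the ``doubled'' generator dies), I expect to conclude that $b$ is conjugate \emph{inside} $H_{zw}$ to a nonzero power of the conjugating generator $z$. Running the identical argument through the second decomposition, where now $b\in H_{zw}\le H_{yzw}$ is conjugate into $F(y,w)$, should instead pin $b$ to a nonzero power of the doubled generator $w$. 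These two conclusions are contradictory: in $H_{zw}\cong B(1,2)$ the abelianization sends $z$ to a generator and $w$ to $0$, so no nonzero power of $z$ is conjugate to a nonzero power of $w$. This contradiction establishes the separation, and hence $\TC(\mathcal{H})=4$.

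The main obstacle is the Bass--Serre bookkeeping itself, and in particular the need to play the two decompositions against each other: the essential point is that the first splitting forces $b$ onto the ``conjugator'' cyclic subgroup of $H_{zw}$ while the second forces it onto the ``doubled'' subgroup, and these cannot coincide. Keeping the various roles of each generator straight across the four $B(1,2)$ factors, and correctly applying the conjugacy dichotomy at each nested amalgamation, is the delicate step; this is precisely the part of the argument for which I would rely on de Cornulier's Bass--Serre analysis.
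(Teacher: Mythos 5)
Your proposal is correct, and its skeleton coincides with the paper's: the same subgroups $A=H_{xy}$, $B=H_{zw}$, the same appeal to \lemref{dualitygroups} to get $\chd(H_{xy}\times H_{zw})=4$, the same reduction to the separation statement via \thmref{main1}, and the same use of both amalgam decompositions of $\mathcal{H}$. Where you genuinely diverge is in the Bass--Serre execution of the separation. The paper first reduces, via \lemref{amalgam1} and \lemref{amalgam2}, to conjugate elements $\alpha\in F(x,z)$ and $\beta\in F(y,w)$, then proves a bespoke lemma (\lemref{hyperbolic}, an induction on geodesics) showing that in the \emph{global} Bass--Serre tree of $\mathcal{H}=H_{xyz}\ast_{F(x,z)}H_{zwx}$ the elliptic elements of $F(y)\ast F(w)$ are precisely the conjugates of $F(y)\cup F(w)$; this yields four cases ($x^m$ versus $y^n$, $x^m$ versus $w^n$, etc.), each killed by the abelianization of a three-generator vertex group. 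You never touch the global tree: you work in the \emph{nested} trees of the vertex-group splittings $H_{zwx}=H_{zw}\ast_{F(w)}H_{wx}$ and $H_{yzw}=H_{yz}\ast_{F(z)}H_{zw}$, replace \lemref{hyperbolic} by the classical conjugacy theorem for amalgams (a cyclically reduced alternating word of length at least $2$ is hyperbolic, plus your rigidity fact (ii)), and track only $b$, pinning it inside $H_{zw}$ both to a nonzero power of $z$ (first decomposition, after discarding $b\sim x^m$ because $x^m$ would then be conjugate in $H_{wx}$ into $F(w)$, impossible for $m\neq 0$ by abelianization) and to a nonzero power of $w$ (second decomposition, symmetrically), contradicting $z^m\not\sim w^n$ in $B(1,2)$. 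I checked this case analysis, including the rigidity step (valid since $z^m$, respectively $w^n$, is not conjugate into the edge group within $H_{zw}$), and it is sound; your facts (i) and (ii) are exactly the paper's \lemref{amalgam1} and \lemref{amalgam2} together with the standard conjugacy theorem of Magnus--Karrass--Solitar/Serre. What your route buys: it avoids proving \lemref{hyperbolic} and the attendant verification that no nontrivial element of $F(y)$ fixes the $H_{zwx}$-vertex, it needs no reduction of $\alpha$ at all (the element $a$ serves only to place the conjugacy across distinct vertex groups in both decompositions and to force $b\neq 1$), and it concentrates the contradiction in the single group $H_{zw}$; what it costs is reliance on the rigidity statement for amalgam conjugacy, which is standard but not proved in the paper. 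Your preliminary observation that no retraction argument onto $B(1,2)$ can work (since $\mathcal{H}$ is perfect while $B(1,2)$ is metabelian, so every perfect subgroup of it is trivial) is also correct.
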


  We will employ results on the structure of amalgams and their Bass--Serre trees, for which Serre's monograph \cite{Ser} is the standard reference. The fundamental result \cite[Theorem 7]{Ser} gives a correspondence between $G$-trees $X$ with quotient graph $X/G$ a segment, and amalgam structures on the group $G$. More precisely, let $G=A\ast_C B$ be an amalgam. Then there exists a $G$-tree $X$, unique up to isomorphism, with fundamental domain a segment $T\subseteq X$ consisting of vertices $v, w\in\mathcal{V}(X)$ and an edge $e=(v,w)\in\mathcal{E}(X)$, such that the stabilizer subgroups are given by $G_v=A$, $G_w=B$ and $G_e=C$, and the natural inclusions $G_e\hookrightarrow G_v$ and $G_e\hookrightarrow G_w$ agree with the defining monomorphisms $C\hookrightarrow A$  and $C\hookrightarrow B$. The tree $X$ together with its action is called the \emph{Bass--Serre tree} of the amalgam $G=A\ast_C B$.

   We will also make use of the elliptic--hyperbolic dichotomy for automorphisms of trees. Let $G$ be a group acting on a tree $X$ (by automorphisms, of course). Recall that a tree is a graph characterized by the property that, for any two vertices $v,w\in \mathcal{V}(X)$, there is a unique geodesic path $\gamma$ in $X$ from $v$ to $w$. The distance from $v$ to $w$ in $X$, denoted $d_X(v,w)$, is defined to be the number of edges in $\gamma$. Now for each $g\in G$ we put
   \[
   \ell_X(g) = \min \{ d_X(v,gv)\mid v\in \mathcal{V}(X)\}.
   \]
   This defines a function $\ell_X\co G\to \mathbb{Z}$, called the \emph{hyperbolic length function} for the action of $G$ on $X$. For each element $g\in G$ there are two possibilities:
   \begin{itemize}
   \item $\ell_X(g)=0$, and $g$ fixes a vertex $v\in \mathcal{V}(X)$. In this case we say that $g$ is \emph{elliptic}, or that \emph{$g$ acts elliptically on $X$}.
   \item $\ell_X(g)>0$, in which case we say that $g$ is \emph{hyperbolic}, or that \emph{$g$ acts hyperbolically on $X$}.
   \end{itemize}
   There are two trivial observations worth making at this point. One is that the hyperbolic length function descends to a function on the conjugacy classes of $G$. In other words, for all $g, h\in G$ we have that $\ell_X(hgh^{-1})=\ell_X(g)$. The second is that if $g\in G$ acts elliptically on $X$, then for any vertex $w\in \mathcal{V}(X)$ the sequence $\{d_X(g^mw,w)\}_m$ is bounded. To see this, consider the geodesic from $w$ to some vertex $w_0$ fixed by $g$. Its image under $g^m$ is the geodesic from $g^mw$ to $w_0$. Composing these two geodesics gives a path from $g^mw$ to $w$, of length $2\cdot d_X(w,w_0)$.


   \begin{lem} \label{hyperbolic}
   Suppose that the free product $G=A\ast B$ acts on a tree $X$ containing an edge $(v,w)\in\mathcal{E}(X)$, such that $A$ fixes $v$ and $B$ fixes $w$, while no element of $A-1$ fixes $w$ and no element of $B-1$ fixes $v$. Then any element $g\in G$ not conjugate to an element of the union $A\cup B$ is hyperbolic.
   \end{lem}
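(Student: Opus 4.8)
The plan is to analyze the normal form of an element $g \in G = A \ast B$ under the given tree action, and to show that any $g$ whose normal form is not conjugate into a single factor must move the edge $(v,w)$ a positive net distance, hence act hyperbolically. First I would write $g$ in reduced form as an alternating word $g = c_1 c_2 \cdots c_k$ with each $c_i$ in $A - 1$ or $B - 1$ and consecutive letters lying in different factors. The hypotheses on stabilizers are precisely what is needed to control how each letter acts on the edge $e = (v,w)$: since $A$ fixes $v$ but no element of $A - 1$ fixes $w$, an element of $A - 1$ carries $v$ to $v$ but moves $w$ to a distinct neighbouring configuration, and symmetrically for $B - 1$. The idea is that each alternation in the reduced word forces the geodesic traced out by successively applying the letters to extend, so that the image of a suitable base vertex under $g$ lies far from the base.

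The cleanest route is to exploit the contrapositive of the elliptic characterization via the boundedness observation already recorded in the excerpt: if $g$ were elliptic, then for any vertex $u$ the sequence $\{d_X(g^m u, u)\}_m$ would be bounded. So I would fix a base vertex, say $v$, and estimate $d_X(g^m v, v)$ from below, showing it grows linearly (or at least is unbounded) in $m$ whenever $g$ is cyclically reduced of length $\geq 2$. Concretely, I expect to establish that for a cyclically reduced word the powers $g^m$ have reduced length growing with $m$, and that reduced length in the free product is reflected faithfully in the translation distance on the tree because distinct cosets of the factor subgroups correspond to distinct vertices. Passing to a cyclically reduced conjugate is harmless here, since $\ell_X$ is a conjugacy invariant (as noted in the excerpt), so it suffices to treat cyclically reduced $g$.

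The key reduction is therefore: an element $g$ not conjugate into $A \cup B$ has a cyclically reduced form of length at least $2$ involving both factors, and such an element translates. I would argue that if $g = c_1 \cdots c_k$ is cyclically reduced with $k \geq 2$, then the vertices $v, c_1 v, c_1 c_2 v, \ldots$ form a genuine geodesic segment (no backtracking), using that a letter from $A - 1$ fixes $v$ but not $w$ and vice versa to verify at each step that the newly added edge differs from the previously traversed one. Stringing together the images under successive powers of $g$ then yields a bi-infinite geodesic (an axis) on which $g$ translates, forcing $\ell_X(g) > 0$.

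The main obstacle I anticipate is the bookkeeping in the no-backtracking verification — making precise that the path $v, c_1 v, c_1 c_2 v, \ldots$ never reverses an edge, which is where the two asymmetric fixing hypotheses (no element of $A - 1$ fixes $w$, no element of $B - 1$ fixes $v$) must be used in exactly the right way at each alternation. Equivalently, one must rule out cancellation when concatenating the geodesic for $g$ with its $g$-translate, which is the standard but delicate point that cyclic reducedness guarantees the first letter of $g$ and the last do not combine to fix the shared vertex. I would treat this by a careful induction on the length of the reduced word, at each stage identifying which of $v$ or $w$ the current partial product fixes and confirming the next letter moves it; once the axis is built, concluding $\ell_X(g) > 0$, hence $g$ is hyperbolic, is immediate.
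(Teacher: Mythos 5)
Your plan is essentially the paper's proof: after conjugating to a cyclically reduced alternating word $g=a_1b_1\cdots a_kb_k$, the paper inducts on $k$ to show $d_X(gw,v)=2k-1$ and $d_X(gw,w)=2k$ (tracking at each step which endpoint of the edge the next letter fixes, exactly the bookkeeping you describe), then concludes from $d_X(g^mw,w)=2mk$ that $g$ cannot be elliptic by the boundedness observation. One small correction to your sketch: the sequence $v,\,c_1v,\,c_1c_2v,\ldots$ is not itself a geodesic, since letters from $A-1$ fix $v$ and consecutive entries can coincide; the true geodesic alternates through translates of both $v$ and $w$, which is why the paper tracks distances to both endpoints --- but this is precisely the refinement your proposed induction (identifying which of $v$ or $w$ each partial product fixes) would produce.
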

   \begin{proof} Suppose that $g\in G$ is not conjugate to an element of $A\cup B$. Taking conjugates if necessary (and without loss of generality), we may suppose that $g=a_1b_1a_2b_2\cdots a_kb_k$, where $k\geq 1$ and each $a_i\in A-1$ and $b_i\in B-1$. We will show that for any $g$ of this form we have
   \begin{equation}\label{distance}
   d_X(gw,v)=2k-1\qquad\mbox{and}\qquad d_X(gw,w)=2k.
    \end{equation}
    In particular, since $g^m$ is of the same form for $m\in\mathbb{N}$, the distances $d_X(g^mw,w)=2mk$ grow linearly with $m$. This implies that $g$ is hyperbolic, by the observation made just prior to the statement of the lemma.

      To establish (\ref{distance}) we proceed by induction on $k$. If $k=1$ and $g=a_1b_1$, then $gw=a_1w$ and $v=a_1v$ are connected by the edge $a_1(w,v)=(a_1w,v)$. Following this by the edge $(v,w)$ gives a path of length $2$ from $gw$ to $w$. Note that $X$ cannot contain the edge $(gw,w)$, as this would result in a triangular circuit. This completes the base case.

      Now suppose that $k>1$ and write $g=a_1b_1h$. By induction, the geodesic from $hw$ to $v$ has length $2k-3$, and the geodesic from $hw$ to $w$ has length $2k-2$ and finishes with $(v,w)$. Since $b_1$ fixes $w$ and does not fix $v$, the geodesic from $b_1hw$ to $w$ has length $2k-2$ and does not finish with $(v,w)$. Therefore the geodesic from $b_1hw$ to $v$ has length $2k-1$ and finishes with $(w,v)$. Now, since $a_1$ fixes $v$ and does not fix $w$, the geodesic from $gw=a_1b_1hw$ to $v$ has length $2k-1$ and does not finish with $(w,v)$. Therefore the geodesic from $gw$ to $w$ has length $2k$ and finishes with $(v,w)$. This completes the induction, and the lemma is proved.
   \end{proof}

   Finally before embarking on the proof of Theorem \ref{Higman}, we state the following two lemmas concerning conjugacy in amalgams, which can be proved directly using the Structure Theorem for amalgams (see \cite[Theorem 2]{Ser}).

\begin{lem}\label{amalgam1}
In an amalgam $G=A\ast_C B$, if an element of $A$ is conjugate in $G$ to an element of $B$, then it is conjugate in $G$ to an element of $C$.
\end{lem}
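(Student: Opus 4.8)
The plan is to argue directly from the Structure (Normal Form) Theorem for amalgams, \cite[Theorem 2]{Ser}, by induction on the syllable length of a conjugating element. First I would dispose of the trivial cases: writing $gag^{-1}=b$ with $a\in A$ and $b\in B$, if either $a\in C$ or $b\in C$ then $a$ is already conjugate to an element of $C$ and there is nothing to prove. So I would assume $a\in A\setminus C$ and $b\in B\setminus C$, fix an element $g\in G$ with $gag^{-1}=b$, and write $g$ in reduced form $g=s_1\cdots s_m$, where (up to a harmless leading factor from $C$) the syllables $s_i$ lie alternately in $A\setminus C$ and $B\setminus C$. The induction is on the number of syllables $m$, the statement to be proved being: for every $a'\in A$ and every conjugator of syllable length at most $m$, if the conjugate lies in $B$ then $a'$ is conjugate to an element of $C$.

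The inductive step splits according to the factor containing the last syllable $s_m$. If $s_m\in A$, then $u:=s_m a s_m^{-1}$ again lies in $A$, and one has $gag^{-1}=g'u(g')^{-1}=b$ where $g'=s_1\cdots s_{m-1}$ has strictly smaller length; by the inductive hypothesis $u$, and hence its conjugate $a$, is conjugate to an element of $C$. The base case, where the conjugator is trivial or lies in $C$, is exactly the observation that an element of $A$ lying in $B$ must lie in $A\cap B=C$. If instead $s_m\in B$, then since $a\in A\setminus C$ the word $s_1\cdots s_m\,a\,s_m^{-1}\cdots s_1^{-1}$ has no cancellation at any junction and is therefore reduced of length $2m+1\ge 3$; by the uniqueness assertion of the Normal Form Theorem it cannot represent $b$, whose syllable length is at most $1$. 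Hence this case never arises, and the induction closes.

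The step requiring the most care is the second (misaligned) case: I must check that no cancellation occurs at the two junctions where the conjugator meets $a$ and $a^{-1}$, so that the concatenation is genuinely reduced and the Normal Form Theorem applies to pin down its length as $2m+1$. This is precisely where the hypotheses $a\notin C$ and $s_m\notin C$ enter, and where one keeps track of the coset-representative ambiguity inherent in normal forms (conjugation by the leading element of $C$ leaves the length unchanged). I expect this bookkeeping to be the only genuine obstacle; the conceptual content is simply that $A\cap B=C$ settles the aligned case while uniqueness of normal form rules out the misaligned one. As a cleaner alternative I could run the identical argument geometrically on the Bass--Serre tree of $A\ast_C B$: here $a\in A$ fixes the base vertex with stabilizer $A$ and, since $b=gag^{-1}$ fixes the vertex with stabilizer $B$, the element $a$ also fixes its translate $g^{-1}w$; these two vertices lie in distinct $G$-orbits and so are distinct, an automorphism fixing two vertices of a tree fixes the geodesic between them pointwise, and thus $a$ lies in an edge stabilizer, which is a conjugate of $C$.
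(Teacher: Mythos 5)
Your proof is correct and takes essentially the route the paper itself indicates: the paper offers no detailed argument for Lemma~\ref{amalgam1}, stating only that it ``can be proved directly using the Structure Theorem for amalgams (see \cite[Theorem 2]{Ser})'', and your induction on syllable length, with $A\cap B=C$ handling the aligned case and uniqueness of reduced length ruling out the misaligned one, is exactly that direct argument carried out in full. Your alternative via the Bass--Serre tree (an elliptic element fixing vertices in distinct orbits fixes an edge, whose stabilizer is a conjugate of $C$) is also valid and fits naturally with the tree-theoretic methods the paper uses elsewhere in Section~4.
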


\begin{lem}\label{amalgam2}
In an amalgam $G=A\ast_C B$, if an element of $A$ is conjugate in $G$ to an element of $C$, then it is conjugate in $A$ to an element of $C$.
\end{lem}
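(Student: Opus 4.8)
The plan is to prove this using the Bass--Serre tree $X$ of the amalgam $G=A\ast_C B$, whose structure was recalled above: it carries a $G$-action with fundamental domain the segment $e=(v,w)$, where $G_v=A$, $G_w=B$, and $G_e=C$. The governing principle is that conjugacy of \emph{elliptic} elements is detected by the geometry of their fixed vertices in $X$, so I would translate the algebraic hypothesis into a statement about fixed points.

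First I would record where the two elements sit. Since $a\in A=G_v$, the element $a$ fixes the vertex $v$; since $c\in C=G_e$, it fixes the edge $e$ and hence both of its endpoints. Writing the hypothesis as $a=gcg^{-1}$ for some $g\in G$, it follows that $a$ fixes the vertex $gw$, being the $g$-translate of a vertex fixed by $c$. Thus $a$ fixes the two vertices $v$ and $gw$.

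The key step is then to invoke the standard fact that an automorphism of a tree fixing two vertices fixes, pointwise, the unique geodesic joining them: $a$ carries $[v,gw]$ to $[av,a(gw)]=[v,gw]$, and since $a$ preserves distance from the fixed vertex $v$, it must fix every vertex, and hence every edge, along this geodesic. Because $v$ and $gw$ lie in distinct $G$-orbits of vertices (the $A$-type and $B$-type vertices are the disjoint sets $G/A$ and $G/B$), we have $v\neq gw$, so the geodesic has length at least one and in particular has a first edge incident to $v$. Every edge incident to $v$ is of the form $\alpha e$ for some $\alpha\in A$ (these are exactly the $A$-translates of $e$ meeting $v$), with stabilizer $G_{\alpha e}=\alpha C\alpha^{-1}$. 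Since $a$ fixes this first edge, we obtain $a\in\alpha C\alpha^{-1}$, that is, $\alpha^{-1}a\alpha\in C$ with $\alpha\in A$, exhibiting $a$ as conjugate in $A$ to an element of $C$, as required.

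I expect the only delicate points to be bookkeeping rather than conceptual: one must check that fixing the two endpoints genuinely forces $a$ to fix the first edge (which holds because in a tree an edge is determined by its ordered endpoints, so no nontrivial permutation of edges at $v$ is possible), and that the edges at $v$ are precisely the $A$-translates of $e$, so that the conjugating element $\alpha$ really lies in $A$ and not merely in $G$. Alternatively, the same conclusion can be reached combinatorially from the Structure Theorem by writing $g$ in normal form and inducting on its syllable length, tracking the cancellation needed to place $gcg^{-1}$ in $A$; there the main obstacle would be the careful cancellation analysis, which the tree argument conveniently packages into the single observation that the fixed-point set of $a$ is a subtree.
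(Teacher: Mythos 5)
Your proof is correct, but it takes a genuinely different route from the paper, which in fact offers no written proof at all: it merely remarks that \lemref{amalgam1} and \lemref{amalgam2} ``can be proved directly using the Structure Theorem for amalgams'' \cite[Theorem 2]{Ser} --- i.e.\ the normal-form, syllable-length cancellation argument that you sketch only as an alternative at the end. Your primary argument, via the Bass--Serre tree, checks out at every step: since $c\in C=G_e$ fixes both endpoints of $e$, the conjugate $a=gcg^{-1}$ fixes $gw$, and together with $v$ (fixed because $a\in A=G_v$) this forces $a$ to fix pointwise the geodesic $[v,gw]$, the fixed-point set of an elliptic automorphism being a subtree; your choice of $gw$ rather than $gv$ is a nice touch, since $v$ and $gw$ lie in the distinct orbit types $G/A$ and $G/B$ and so are automatically distinct, avoiding a case split (with $gv$ one would have to handle $gv=v$ separately, though that case is immediate). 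Your identification of the edges at $v$ with $\{\alpha e : \alpha\in A\}$ is also right --- an edge $g'e$ meets $v$ only via $g'v=v$, since $g'w=v$ is excluded by orbit types --- so the first edge of the geodesic has stabilizer $\alpha C\alpha^{-1}$ with $\alpha\in A$, yielding the conclusion. Comparing the two approaches: the tree argument is uniform with the machinery the paper already deploys in Section 4 (the Bass--Serre tree, ellipticity), and it actually proves more than asked --- running the identical argument with $b\in B$ in place of $c\in C$ delivers \lemref{amalgam1} and \lemref{amalgam2} simultaneously, in the stronger form that an element of $A$ conjugate in $G$ into $B$ is already conjugate \emph{in $A$} into $C$ --- whereas the paper's suggested route is independent of the tree formalism at the cost of the cancellation bookkeeping you mention. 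One phrase of yours is slightly off: ``no nontrivial permutation of edges at $v$ is possible'' is false in general (elements of $A$ can permute the edges at $v$); but the justification you actually rely on is the correct one, namely that a tree is a simple graph, so an automorphism fixing both endpoints of an edge individually must fix that edge.
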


\begin{proof}[Proof of Theorem \ref{Higman}]
Let $H_{xy}=\langle x,y \rangle\le \mathcal{H}$ be the subgroup generated by $x$ and $y$, and let $H_{zw}=\langle z,w \rangle\le \mathcal{H}$ be the subgroup generated by $z$ and $w$. Both $H_{xy}$ and $H_{zw}$ are isomorphic to the Baumslag--Solitar group $B(1,2)$, hence are duality groups of dimension $2$. By Lemma \ref{dualitygroups} their product $H_{xy}\times H_{zw}$ is a duality group of dimension $4$, and so $\chd(H_{xy}\times H_{zw})=4$. The proof of Theorem \ref{Higman} will be complete if we can show that the hypothesis of Theorem \ref{main1} applies to these subgroups. Therefore we must show that $gH_{xy}g^{-1}\cap H_{zw}=\{1\}$ for all $g\in \mathcal{H}$.

 Suppose that $a\in H_{xy}$ is conjugate in $\mathcal{H}$ to $b\in H_{zw}$; we aim to show that $a=1$. Using the first description of $\mathcal{H}$ as an amalgam $H_{xyz}\ast_{F(x,z)} H_{zwx}$ and Lemma \ref{amalgam1}, we find that $a$ is conjugate in $\mathcal{H}$ to some element of $F(x,z)$. Using the second description of $\mathcal{H}$ as an amalgam $H_{yzw}\ast_{F(y,w)} H_{wxy}$ and Lemma \ref{amalgam1} again, we find that $b$ is conjugate in $\mathcal{H}$ to some element of $F(y,w)$. We are thus reduced to showing that an element $\alpha$ of $F(x,z)$ conjugate in $\mathcal{H}$ to an element $\beta$ of $F(y,w)$ must be trivial.

Consider the first description of $\mathcal{H}$ as an amalgam $H_{xyz}\ast_{F(x,z)} H_{zwx}$, and its corresponding Bass--Serre tree $X$. The element $\alpha\in F(x,z)$ fixes an edge of the tree, and therefore acts elliptically. The element $\beta\in F(y,w)$ therefore also acts elliptically on $X$. Restricting to an action of $F(y,w)=F(y) \ast F(w)$ on $X$ and applying Lemma \ref{hyperbolic}, we find that $\beta$ must be conjugate in $F(y,w)$ to an element of $F(y)\cup F(w)$. We conclude that $\beta$ is conjugate in $\mathcal{H}$ to some element $\beta' \in F(y)\cup F(w)$.

The same argument using the description of $\mathcal{H}$ as an amalgam $H_{yzw}\ast_{F(y,w)} H_{wxy}$ shows that $\alpha\in F(x,z)$ is conjugate in $\mathcal{H}$ to some $\alpha'\in F(x)\cup F(z)$.

We are thus reduced to showing that an element $\alpha'\in F(x)\cup F(z)$ conjugate in $\mathcal{H}$ to an element $\beta'\in F(y)\cup F(w)$ must be trivial.

     Consider first the case that $\alpha' =x^m\in F(x)$ and $\beta'=y^n\in F(y)$. Using the first description of $\mathcal{H}$ as an amalgam $H_{xyz}\ast_{F(x,z)} H_{zwx}$ and Lemma \ref{amalgam2}, we find that $x^m$ is conjugate to $y^n$ in the group $H_{xyz}$ with presentation $\langle x,y,z \mid [x,y]=y,\, [y,z]=z \rangle$. Since non-trivial powers of $x$ survive in the abelianization while powers of $y$ are trivial, we find that $m=0$ and so $\alpha'=1$.

     Secondly, consider the case that $\alpha'=x^m\in F(x)$ and $\beta'=w^n\in F(w)$. Using the second description of $\mathcal{H}$ as an amalgam $H_{yzw}\ast_{F(y,w)} H_{wxy}$ and Lemma \ref{amalgam2}, we find that $x^m$ is conjugate to $w^n$ in $H_{wxy}$. This time powers of $w$ survive in the abelianization while powers of $x$ are trivial, and again we find that $\alpha'=1$.

     The remaining two cases are dealt with similarly. \end{proof}

\begin{rem} The arguments of this section, due to Y.\ de Cornulier \cite{Cor}, indicate that the topological complexity of groups is a much deeper property than category. They also hint at the possibility of proving a more general result about the topological complexity of free products with amalgamation. Note that for free products we have $\TC(G\ast H)\ge \chd(G\times H)$, as follows easily from our results, or from results of Dranishnikov on the topological complexity of wedge sums \cite{Dra}.
\end{rem}

\section{Proof of Theorem \ref{main1}}

In order to prove Theorem \ref{main1}, we find it convenient to use the \emph{$1$-dimensional category} of R.\ H.\ Fox \cite{Fo41} (see also \cite{OS12} and \cite{Sv66}). Let $X$ be a topological space.

\begin{defn} An open subset $U\subseteq X$ is \emph{$1$-categorical} if every composition $L\to U\hookrightarrow X$, where $L$ is a complex with $\dim(L)\le 1$, is null-homotopic. The \emph{$1$-dimensional category} of $X$, denoted $\cat_1(X)$, is the least integer $k$ for which $X$ admits a cover by $k+1$ open sets $U_0,\ldots , U_k$, each of which is $1$-categorical.
\end{defn}

We recall the following facts about $\cat_1$, proofs of which can be found in \cite[Proposition 44]{Sv66} and \cite[Corollary 2.2]{OS12}.

\begin{prop}\label{cat1props}
Let $X$ be a connected complex.
\be
\item We have $\cat_1(X)=\secat(\widetilde X \to X)$, the sectional category of the universal cover.
\item If $X$ is a $K(G,1)$, then $\cat_1(X)=\cat(X)=\chd(G)$.
\ee
\end{prop}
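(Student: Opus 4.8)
The plan is to prove both parts by identifying, for a single open set $U\subseteq X$, three equivalent conditions: that $U$ is $1$-categorical; that the image of $\pi_1(U)\to\pi_1(X)$ is trivial on each path-component of $U$; and that $U$ admits a partial section of the universal covering $p\co\widetilde X\to X$. The equivalence of the last two is the standard lifting criterion for covering spaces: the inclusion $U\hookrightarrow X$ lifts through the universal cover precisely when it kills $\pi_1$ on each component, and since a covering is a fibration a partial section is the same as a partial homotopy section (as remarked after the definition of $\secat$). Granting the full equivalence, an open set is $1$-categorical exactly when it admits a partial section of $p$, so a cover by $1$-categorical sets is literally the same as a cover by sets admitting partial sections; comparing the smallest such covers yields $\cat_1(X)=\secat(p)$, which is part (1).

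The equivalence between being $1$-categorical and killing $\pi_1$ on each component is where the real work lies, and I expect it to be the main obstacle. That $1$-categorical implies the $\pi_1$ condition is immediate by taking $L=S^1$. For the converse I would reduce to connected $L$ (treating the components of $L$ separately, then using path-connectivity of $X$ to bring the resulting constant values together), and use that a connected $1$-complex is homotopy equivalent to a wedge of circles $\bigvee_\alpha S^1$. A map out of such a wedge into $X$ is null-homotopic if and only if its restriction to each circle is, and each such restriction represents a class in the image of $\pi_1(U)\to\pi_1(X)$, hence is trivial by hypothesis. The delicate points to handle carefully are the passage between free and based homotopy (free homotopy classes of loops are conjugacy classes in $\pi_1(X)$, based ones are elements) and the assembly of the per-circle null-homotopies into a single null-homotopy on the wedge.

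For part (2), one inequality is formal: every categorical open set is $1$-categorical, since a null-homotopy of $U\hookrightarrow X$ composes with any map $L\to U$ to give a null-homotopy; thus $\cat_1(X)\le\cat(X)$ for every $X$. When $X$ is a $K(G,1)$ the universal cover $\widetilde X$ is contractible, so $p\co\widetilde X\to X$ has contractible domain and is therefore null-homotopic. Applying part (1) together with \propref{surjnull}, its clause (1) gives $\secat(p)\le\cat(X)$, while clause (2) (applicable since $p$ is null-homotopic) gives $\secat(p)\ge\cat(X)$. Hence $\cat_1(X)=\secat(p)=\cat(X)$, and the Eilenberg--Ganea--Stallings--Swan theorem identifies $\cat(X)=\chd(G)$, completing the proof.
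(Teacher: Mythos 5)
Your proof is correct. Note that the paper does not actually prove \propref{cat1props}: it cites \v{S}varc \cite[Proposition 44]{Sv66} and \cite[Corollary 2.2]{OS12}, so you are supplying an argument where the paper gives only references. Your central equivalence --- for an open $U\subseteq X$: $U$ is $1$-categorical $\iff$ $\pi_1(U)\to\pi_1(X)$ is trivial on each path component $\iff$ $U$ admits a partial section of $\widetilde X\to X$ --- is exactly \v{S}varc's characterization, and its nontrivial implication is the same lifting-criterion argument the paper does spell out in its proof of \lemref{circles}. The one genuine difference: having reduced to the $\pi_1$ condition, the paper lifts $U\hookrightarrow X$ to $\widetilde X$ and kills maps of $1$-complexes there, using simple connectivity of $\widetilde X$ and cellular approximation, whereas you work directly in $X$ via the wedge-of-circles structure of connected $1$-complexes; both routes work, and you correctly flag the only delicate points (free versus based homotopy, harmless since a class conjugate to the trivial one is trivial, and assembling null-homotopies component by component, which is continuous because components of a CW complex are open). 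Two hypotheses worth making explicit: Hatcher's lifting criterion requires each component of $U$ to be locally path-connected, which holds because CW complexes are locally contractible; and trading partial homotopy sections for genuine partial sections uses that a covering map is a Hurewicz fibration. Your part (2) --- contractibility of $\widetilde X$ makes $p$ null-homotopic, so the two clauses of \propref{surjnull} pinch $\secat(p)=\cat(X)$, and Eilenberg--Ganea--Stallings--Swan gives $\cat(X)=\chd(G)$ --- is the standard argument and is complete.
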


We also need the following lemma.

\begin{lem}\label{circles}
Let $X$ be a connected complex. An open set $U\subseteq X$ is $1$-categorical if and only if every composition $S^1\to U\hookrightarrow X$ is null-homotopic.
\end{lem}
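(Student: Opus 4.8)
The forward implication is immediate from the definition: since $S^1$ is itself a complex of dimension $1$, taking $L = S^1$ shows that a $1$-categorical $U$ makes every composite $S^1 \to U \hookrightarrow X$ null-homotopic. All the content lies in the converse, which I would prove in three moves. First, I would reduce to a connected source. Suppose $f\colon L \to U$ is given with $\dim(L)\le 1$, write $i\colon U\hookrightarrow X$ for the inclusion, and set $g = i\circ f$. The path components of a CW complex are open and closed, so a null-homotopy of $g$ can be constructed one component at a time; because $X$ is path-connected, the resulting constant values can afterwards be slid to a single point by a homotopy defined independently on each (clopen) component. Hence $g$ is null-homotopic provided each restriction to a component of $L$ is, and each such component is a connected $1$-complex mapped into $U$ by $f$. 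So I may assume $L$ is connected.

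Next I would trade the connected graph $L$ for a wedge of circles. Choosing a maximal tree $T\subseteq L$, the subcomplex $T$ is contractible, so the collapse $q\colon L\to L/T$ is a homotopy equivalence and $L/T = \bigvee_\alpha S^1_\alpha$ is a wedge of circles, one for each edge outside $T$; this is the standard fact that collapsing a contractible subcomplex is a homotopy equivalence. Fixing a homotopy inverse $s$ with $s\circ q\simeq \mathrm{id}_L$, one has $g \simeq (g\circ s)\circ q$, so it is enough to show that $h := g\circ s\colon \bigvee_\alpha S^1 \to X$ is null-homotopic. The gain is that $h$ restricted to the $\alpha$-th summand equals $i\circ\big(f\circ s|_{S^1_\alpha}\big)$, which is exactly a composite $S^1 \to U\hookrightarrow X$ and hence null-homotopic by hypothesis.

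Finally I would assemble the summand-wise null-homotopies into one. A loop that is freely null-homotopic represents the trivial conjugacy class and hence the identity of $\pi_1(X)$; thus each $h|_{S^1_\alpha}$ is null-homotopic \emph{rel} the wedge point, equivalently $h_*\colon \pi_1\!\big(\bigvee_\alpha S^1\big)\to \pi_1(X)$ vanishes on every free generator and so is trivial. Since based maps out of a wedge of circles correspond to families of based loops, the based null-homotopies on the individual summands --- all fixing the common wedge point --- glue to a single based null-homotopy of $h$. Then $g\simeq h\circ q$ is null-homotopic, and $U$ is $1$-categorical.

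I expect this last move to be the only real obstacle. The delicate points are the free-versus-based distinction (promoting each circle's free null-homotopy to a based one, which rests on the identification of free homotopy classes of loops with conjugacy classes in $\pi_1(X)$) and checking that possibly infinitely many based null-homotopies glue continuously over an infinite wedge; the reduction to a connected source and the homotopy-equivalence step are routine.
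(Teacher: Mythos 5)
Your proof is correct, but it takes a genuinely different route from the paper's. The paper argues via the universal cover: from the hypothesis (together with the fact that a freely null-homotopic based loop is based null-homotopic, hence trivial in $\pi_1$) it deduces that $\pi_1(U,x)\to\pi_1(X,x)$ is trivial for \emph{every} basepoint $x\in U$, applies the lifting criterion to each path component of $U$ to lift the inclusion $U\hookrightarrow X$ through $\widetilde X\to X$, and then observes that any composite $L\to U\hookrightarrow X$ with $\dim(L)\le 1$ lifts to the simply connected $\widetilde X$, where it is null-homotopic by cellular approximation. You instead stay downstairs: reduce to connected $L$, collapse a maximal tree to replace $L$ by a wedge of circles, invoke the hypothesis one circle at a time, promote the free null-homotopies to based ones via the identification of free homotopy classes of loops with conjugacy classes in $\pi_1(X)$, and glue. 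Note that both arguments pivot on the same free-versus-based fact; in effect your tree-collapse-and-glue step reproves by hand, in dimension one, what the paper extracts from cellular approximation upstairs. What each approach buys: the paper's proof is shorter and meshes with the surrounding framework, since Proposition~\ref{cat1props}(1) identifies $\cat_1(X)$ with $\secat(\widetilde X\to X)$, so the lifting viewpoint makes the lemma an immediate companion to the way it is used in the proof of Theorem~\ref{main1}; your proof is covering-space-free and makes the role of $1$-dimensionality completely explicit. The delicate points you flag are genuine but harmless: a maximal tree exists in any connected graph by Zorn's lemma and collapsing a contractible CW subcomplex is a homotopy equivalence for CW pairs; and the based null-homotopies over an infinite wedge do glue continuously, because the quotient map $\bigsqcup_\alpha S^1_\alpha \to \bigvee_\alpha S^1_\alpha$ remains a quotient map after taking the product with the locally compact interval $I$, while all the homotopies agree (constantly) on the wedge point.
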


\begin{proof}
The only if statement is trivial. Suppose that every composition $S^1\to U\hookrightarrow X$ is null-homotopic. It follows that the induced map $\pi_1(U,x)\to \pi_1(X,x)$ is trivial, for every choice of basepoint $x\in U$ (since based maps to $X$ which are null-homotopic are also based null-homotopic). So we may apply the lifting criterion \cite[Proposition 1.33]{Hat} to each path component of $U$ to conclude that $U\hookrightarrow X$ lifts through $\widetilde X\to X$. This means that every composition $L\to U\hookrightarrow X$ with $\dim(L)\le 1$ lifts through $\widetilde X\to X$ and, by
cellular approximation, is therefore null-homotopic. Hence $U$ is $1$-categorical.
\end{proof}

\begin{proof}[Proof of Theorem \ref{main1}] Let $G$ be a group with subgroups $A$ and $B$. Let $X$, $Y_A$ and $Y_B$ denote $K(\pi,1)$ complexes for $\pi=G$, $A$ and $B$ respectively. The inclusion monomorphisms induce pointed maps $Y_A\to X$ and $Y_B\to X$. There results a pullback diagram
\begin{equation}\label{pullback}
\xymatrix{
 E \ar[r] \ar[d]_q & X^I \ar[d]^{\pi_X} \\
  Y_A\times Y_B \ar[r] & X\times X \\
}
\end{equation}
from which it follows by Proposition \ref{pullbackbound} that $\secat(q)\le \secat(\pi_X) = \TC(X)$. We will show that, under the hypotheses of Theorem \ref{main1}, we have $\chd(A\times B) = \cat_1(Y_A\times Y_B) \le \secat(q)$.

          We therefore assume that $gAg^{-1}\cap B=\{1\}$ for every $g\in G$. We will show that any open set $U\subseteq Y_A\times Y_B$ which admits a partial section for $q$ must be $1$-categorical, and hence $\cat_1(Y_A\times Y_B)\le \secat(q)$. By Lemma \ref{circles}, it suffices to show that for any map $S^1\to U$, the composition $\phi\co S^1\to U\to Y_A\times Y_B$ is null-homotopic.

Applying the functor $[S^1,-]$ given by unbased homotopy classes of loops, and recalling that $\pi_X\co X^I\to X\times X$ is homotopically equivalent to the diagonal $\Delta\co X\to X\times X$, we arrive at a commutative diagram of pointed sets
\begin{equation}\label{homotopy sets}
\xymatrix{
[S^1,E] \ar[r] \ar[d]_{q_\#} & [S^1,X^I] \ar[r]^{\simeq} \ar[d]_{(\pi_X)_\#}& [S^1,X] \ar[dl]^{\Delta_\#} \\
[S^1,Y_A \times Y_B] \ar[r] & [S^1,X\times X]  &  \\
}
\end{equation}
Recall that for any path-connected pointed space $(Z,z)$, the set $[S^1,Z]$ is in bijection with the set of conjugacy classes in the fundamental group $\pi_1(Z,z)$ (see \cite[Section 4.A]{Hat}). We will adopt the non-standard notation $[\pi]$ for the set of conjugacy classes in a group $\pi$. It is easily checked that $[S^1, Y_A\times Y_B]\cong [A] \times [B]$, and that $\Delta_\#$ can be identified with the diagonal map $[G] \to [G] \times [G]$ on the conjugacy classes of $G$.

Since $U$ admits a partial section for $q$, there is a map $\psi\co S^1 \to E$ such that $q_\#[\psi]=[\phi]$. Now chasing the element $[\psi]$ around the above diagram, we find that $[\phi]=([a],[b])\in  [A] \times [B]$, where $a\in A$ and $b\in B$ are elements which are conjugate in $G$. By our assumption that $gAg^{-1}\cap B=\{1\}$ for all $g\in G$, this implies that $a=b=1$, and so $\phi$ is in the trivial homotopy class.
\end{proof}

\begin{rem} One could also prove Theorem \ref{main1} by analysing the long exact sequence in homotopy of the fibration $q\co E\to Y_A\times Y_B$ (compare \cite{GLO}). When the condition $AB=G$ does not hold, the total space $E$ is disconnected. The conjugation hypothesis arises from change-of-basepoint isomorphisms. We have chosen the $\cat_1$ proof as more conceptual and easier to follow.
\end{rem}

\bibliographystyle{amsplain}
\bibliography{AsphericalRefs2}

\end{document}